
\documentclass[12pt]{amsart}%
\usepackage{amsmath}
\usepackage{amsfonts}
\usepackage{amssymb}
\usepackage{amstext}
\usepackage{graphicx}%
\setcounter{MaxMatrixCols}{30}
\providecommand{\U}[1]{\protect \rule{.1in}{.1in}}
\providecommand{\U}[1]{\protect \rule{.1in}{.1in}}
\providecommand{\U}[1]{\protect \rule{.1in}{.1in}}
\marginparwidth 0pt   \marginparsep 0pt
\oddsidemargin -0.1in \evensidemargin 0pt
\topmargin -.3in
\textwidth 6.5in
\textheight 8.5in

\newtheorem{theorem}{Theorem}[section]
\newtheorem{lemma}{Lemma}[section]

\newtheorem{corollary}{Corollary}[section]

\numberwithin{equation}{section}

\theoremstyle{remark}
\newtheorem{remark}{Remark}[section]
\numberwithin{equation}{section}
\begin{document}
\title[On the $p$-pseudoharmonic map heat flow]{On the $p$-pseudoharmonic map heat flow }
\author{$^{\ast}$Shu-Cheng Chang$^{1}$}
\address{$^{1}$Department of Mathematics and Taida Institute for Mathematical Sciences
(TIMS), National Taiwan University, Taipei 10617, Taiwan, R.O.C.}
\email{scchang@math.ntu.edu.tw }
\author{$^{\sharp}$Yuxin Dong$^{2}$}
\address{$^{2}$ School of Mathematics, Fudan University, Shanghai, 200433, P.R. China}
\email{yxdong@fudan.edu.cn}
\author{$^{\dag}$Yingbo Han$^{3}$}
\address{$^{3}${College of Mathematics and Information Science, Xinyang Normal
University}\\
Xinyang,464000, Henan, P.R. China}
\email{{yingbohan@163.com}}
\thanks{$^{\ast}$Research supported in part by the NSC of Taiwan}
\thanks{$^{\sharp}$Research supported in part by the NSFC No. 11271071}
\thanks{$^{\dag}$Research supported in part by the NSFC No. 11201400, Nanhu Scholars
P. for Young Scholars of XYNU}
\subjclass{Primary 32V05, 32V20; Secondary 53C56}
\keywords{$p$-pseudoharmonic map; $p$-pseudoharmonic map heat flow; Morse-type harnack
inequality; pseudohermitian manifold; Sasakian manifold; $p$-sublaplacian.}
\maketitle

\begin{abstract}
In this paper, we consider the heat flow for $p$-pseudoharmonic maps from a
closed Sasakian manifold $(M^{2n+1},J,\theta)$ into a compact Riemannian
manifold $(N^{m},g_{ij})$. We prove global existence and asymptotic
convergence of the solution for the $p$-pseudoharmonic map heat flow, provided
that the sectional curvature of the target manifold $N$ is nonpositive.
Moreover, without the curvature assumption on the target manifold, \ we obtain
global existence and asymptotic convergence of the $p$-pseudoharmonic map heat
flow as well when its initial $p$-energy is sufficiently small.

\end{abstract}

\section{ Introduction}

In the seminal paper of J. Eells and J. H. Sampson (\cite{es}), they proved
the existence theorem of harmonic maps between compact Riemannian manifolds
via the harmonic map heat flow when the target manifold with nonpositive
sectional curvature. In our previous papers (\cite{cc1}, \cite{cc2}), we
considered the following pseudoharmonic map heat flow from a closed
pseudohermitian manifold $(M^{2n+1},J,\theta)$ into a compact Riemannian
manifold $(N^{m},g_{ij})$ on $M\times \lbrack0,T)$ :
\begin{equation}
\left \{
\begin{split}
&  \frac{\partial \varphi^{k}}{\partial t}=\Delta_{b}\varphi^{k}+2h^{\alpha
\bar{\beta}}\widetilde{\Gamma}_{ij}^{k}\varphi_{\alpha}^{i}\varphi_{\bar
{\beta}}^{j},\quad k=1,\cdots,m,\\
&  \varphi(\omega,0)=u_{0}(\omega),\enspace u_{0}\in C^{\infty}(M;N),
\end{split}
\right.  \label{1}%
\end{equation}
for $\varphi \in C^{\infty}(M\times \lbrack0,T);N).$ Here $\Delta_{b}$ is the
sub-Laplace operator and $\widetilde{\Gamma}_{ij}^{k}$ are the Christoffel
symbols of $N$. Then we proved the \ pseudoharmonic map heat flow (\ref{1})
admits a unique, smooth solution $\varphi \in C^{\infty}(M\times \lbrack
0,\infty);N)$ with subconverges to a pseudoharmonic maps $\varphi_{\infty}\in
C^{\infty}(M,N)$ as $t\rightarrow \infty$, provided that $M$ is Sasakian (i.e.
vanishing pseudohermitian torsion) and the sectional curvature $K^{N}$ is
nonpositive. This served as the CR analogue to Eells-Sampson's Theorem
(\cite{es}) for the harmonic map heat flow. Secondly, without the curvature
assumption on the target manifold (\cite{cs}, \cite{cd}), we showed that there
exists $\varepsilon>0$ depending on $n,M,N$ and $||\nabla_{b}u_{0}%
||_{L^{\infty}(M)}$ such that for any initial data $u_{0}\in C^{\infty}(M;N)$,
if the energy is small enough
\[
E(u_{0})=\int_{M}|\nabla_{b}u_{0}|^{2}d\mu \leq \varepsilon,
\]
then the solution $\varphi$ of (\ref{1}) exists for all $t>0$. Moreover, as
$t\rightarrow \infty$, $\varphi(t)$ converges to a constant map. Here
$\nabla_{b}$ is the subgradient on the holomorphic subbundle $T_{1,0}M\oplus
T_{0,1}M.$

In this paper, we extend the above results to the $p$-pseudoharmonic map heat
flow (\ref{4}) on $M\times \lbrack0,T).$ Let $(M^{2n+1},J,\theta)$ be a closed
pseudohermitian manifold and $(N^{m},g_{ij})$ be a compact Riemannian
manifold. At each point $x\in M$, we may take a local coordinate chart
$U_{x}\subset M$ of $x$ and a local coordinate chart $V_{\varphi(x)}\subset N$
of $\varphi(x)$ such that $\varphi(U_{x})\subset V_{\varphi(x)}$. For a
$C^{1}$-map $\varphi:M\rightarrow N,$ we define the energy density
$e(\varphi)$ of $\varphi$ at the point $\omega \in U_{x}$ by
\[
e(\varphi)(\omega)=\frac{1}{2}h^{\alpha \overline{\beta}}(\omega)g_{ij}%
(\varphi(\omega))\varphi_{\alpha}^{i}\varphi_{\overline{\beta}}^{j}.
\]
Here $h_{\alpha \overline{\beta}}$ is the Levi metric on $(M^{2n+1},J,\theta).$
It can be checked that the energy density is intrinsically defined, i.e.,
independent of the choice of local coordinates. Its $p$-energy $E_{p}%
(\varphi)$ of $\varphi$ is defined by
\begin{equation}
E_{p}(\varphi)=\frac{1}{p}\int_{M}e(\varphi)^{\frac{p}{2}}d\mu=\frac{1}{p}%
\int_{M}|\nabla_{b}\varphi|^{p}d\mu,\  \  \ p>1 \label{2}%
\end{equation}
where $d\mu=\theta \wedge(d\theta)^{n}$ is the volume element on $M$. \ The
$p$-pseudoharmonic map is the critical point of (\ref{2}) which is the
solution of the Euler-Lagrange equation associated to its $p$-energy
$E_{p}(\varphi)$%
\begin{equation}
\triangle_{b,p}\varphi^{k}+2|\nabla_{b}\varphi|^{p-2}\widetilde{\Gamma}%
_{ij}^{k}\varphi_{\alpha}^{i}\varphi_{\bar{\alpha}}^{j}=0,\  \  \ k=1,...,m,
\label{3}%
\end{equation}
where $\Gamma_{ij}^{k}$ are the Christoffel symbols of $(N^{m},g_{ij})$ and
$\triangle_{b,p}$ is the $p$-sublaplacian
\[
\triangle_{b,p}\varphi^{k}=\operatorname{div}_{b}(|\nabla_{b}\varphi
|^{p-2}\nabla_{b}\varphi^{k}).
\]
For $p=2$, $\triangle_{b,2}$ is the usual sublaplacian. It is singular for
$p\neq2$ at points where $\nabla_{b}\varphi=0.$ Let $S^{1,p}(M,N)$ be the
Folland-Stein space (see next section for definition). We call a map
$\varphi \in S^{1,p}(M,N)$ is a weakly pseudoharmonic map if it is a weak
solution of (\ref{3}). In general it is far from understood about the
regularity of the weak $p$-pseudoharmonic map (\cite{f}, \cite[Theorem
21.1.]{fs}, \cite{jl}, \cite{hs}, \cite{dt}, \cite{xz}).

In this paper, we consider the associated $p$-pseudoharmonic map heat flow on
$M\times \lbrack0,T)\ $:
\begin{equation}
\left \{
\begin{array}
[c]{l}%
\frac{\partial \varphi^{k}}{\partial t}=\operatorname{div}_{b}(|\nabla
_{b}\varphi|^{p-2}\nabla_{b}\varphi^{k})+2|\nabla_{b}\varphi|^{p-2}%
h^{\alpha \bar{\beta}}\tilde{\Gamma}_{ij}^{k}\varphi_{\alpha}^{i}\varphi
_{\bar{\beta}}^{j},\  \  \ k=1,\cdots,m\\
\varphi(\omega,0)=u_{0}(\omega),
\end{array}
\right.  \label{4}%
\end{equation}
where $u_{0}:M\rightarrow N$ is the initial data which to be of class
$C^{2,\alpha}\ $for $0<\alpha<1.$ We will follow methods of \cite{cc1},
\cite{fr1} and \cite{fr2} to study the global weak solutions to the
$p$-pseudoharmonic map heat flow (\ref{4}) from a closed Sasakian
$(M^{2n+1},J,\theta)$ to a compact Riemannian manifold $(N,g_{ij}).$ In fact,
we first consider the following regularized problem of (\ref{4}) for
$0<\delta<1$,
\begin{equation}
\left \{
\begin{array}
[c]{l}%
\frac{\partial \varphi^{k}}{\partial t}=\operatorname{div}_{b}([|\nabla
_{b}\varphi|^{2}+\delta]^{\frac{p-2}{2}}\nabla_{b}\varphi^{k})+2[|\nabla
_{b}\varphi|^{2}+\delta]^{\frac{p-2}{2}}h^{\alpha \bar{\beta}}\tilde{\Gamma
}_{ij}^{k}\varphi_{\alpha}^{i}\varphi_{\bar{\beta}}^{j},\  \  \ k=1,\cdots,m\\
\varphi(\omega,0)=u_{0}(\omega),
\end{array}
\right.  \label{4a}%
\end{equation}
on $M\times \lbrack0,T_{\delta})$ for the regularized $p$-energy $E_{p,\delta
}:$
\[
E_{p,\delta}(\varphi)=\frac{1}{p}\int_{M}(|\nabla_{b}\varphi|^{2}%
+\delta)^{\frac{p}{2}}d\mu
\]
with the regularized energy density $e_{\delta}(\varphi):=|\nabla_{b}%
\varphi|^{2}+\delta.$

The main difficulty comes from the CR Bochner formula (\ref{CR}) with a mixed
term $\langle J\nabla_{b}\varphi,\nabla_{b}\varphi_{0}\rangle_{L_{\theta}}$
involving the covariant derivative of $\varphi$ in the direction of the
characteristic vector field $\mathbf{T}$, which has no analogue in the
Riemannian case. However, by adding an $\mathbf{T}$-energy density
$e_{0}(\varphi)$, we are able to overcome such a difficulty and conclude that
the $p$-pseudoharmonic map heat flow has a global smooth solution from a
closed Sasakian manifold $(M^{2n+1},J,\theta)$ into a compact Riemannian
manifold $(N^{m},g_{ij})$. More precisely, with the same spirit as in
\cite{cc1}, instead of the original energy density $e_{\delta}(\varphi),$ we
estimate the total energy density
\[
\widehat{e_{\delta}}(\varphi)=e_{\delta}(\varphi)+\varepsilon e_{0}(\varphi)
\]
by adding an $\mathbf{T}$-energy density
\[
e_{0}(\varphi)=g_{ij}\varphi_{0}^{i}\varphi_{0}^{j}%
\]
for some positive constant $\varepsilon$ which to be determined later. We
first are able to derive the Moser type Harnack inequality (Lemma \ref{l32}
and \cite[Theorem 1.1.]{cc1}) for the total (regularized) energy density
$\widehat{e}(\varphi)$ if $M$ is Sasakian. Secondly, based on \cite{cc1},
\cite{fr1} and \cite{fr2}, we show the energy density of the regularized
$p$-pseudoharmonic map heat flow (\ref{4a}) is uniformly bounded as following :

\begin{theorem}
\label{T31} Let $(M^{2n+1},J,\theta)$ be a closed Sasakian manifold and
$(N,g)$ be a compact Riemannian manifold. Let $u_{0}\in C^{2,\alpha}(M,N)$,
$0<\alpha<1$ and $||\nabla_{b}u_{0}||_{L^{\infty}(M)}\leq K$.

$(i)$\ There exists $\varepsilon_{0}>0$ depending on $K,\ M,N$ such that if
\begin{equation}
E_{p}(u_{0})=\frac{1}{p}\int_{M}|\nabla_{b}u_{0}|^{p}d\mu \leq \varepsilon_{0},
\label{t31a}%
\end{equation}
then the solution $\varphi_{\delta}$ of (\ref{4a}) satisfies
\begin{equation}
||\nabla_{b}\varphi_{\delta}||_{L^{\infty}([0,T^{\prime})\times M)}\leq
C\  \  \  \mathrm{and}\  \  \ ||\mathbf{T}\varphi_{\delta}||_{L^{\infty
}([0,T^{\prime})\times M)}\leq C,\  \label{t31}%
\end{equation}
where $C$ is a constant depending on $K,M$ and $N$.

$(ii)$ In addition, if \ the sectional curvature of $(N,g_{ij})$ is
nonpositive
\[
K^{N}\leq0,
\]
then (\ref{t31}) holds without the smallness assumption (\ref{t31a}).

$(iii)\ $The energy inequality will be
\begin{equation}
\int_{0}^{t}\int_{M}|\partial_{s}\varphi_{\delta}|^{2}(x,s)d\mu ds+E_{p,\delta
}(\varphi_{\delta}(\cdot,t))=E_{p,\delta}(u_{0})\leq E_{p,1}(u_{0}%
),\quad \forall \,t\in \lbrack0,T_{\delta}). \label{t31b}%
\end{equation}

\end{theorem}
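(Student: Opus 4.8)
The plan is to run a Bochner/maximum-principle argument on the augmented energy density $\widehat{e_\delta}(\varphi_\delta)$, exactly in the spirit of \cite{cc1}, but adapted to the regularized $p$-flow \eqref{4a}. First I would record the evolution equation for $e_\delta(\varphi_\delta)=|\nabla_b\varphi_\delta|^2+\delta$ along \eqref{4a}. Differentiating the flow, commuting horizontal derivatives, and invoking the CR Bochner formula \eqref{CR} produces, after absorbing the good (nonnegative) terms coming from the Hessian and from the Sasakian structure, an inequality of the schematic form $\partial_t e_\delta \le \operatorname{div}_b\bigl(e_\delta^{(p-2)/2}\nabla_b e_\delta\bigr)+C\,e_\delta^{p/2}\cdot e_\delta + (\text{target curvature terms}) + C\,|\langle J\nabla_b\varphi_\delta,\nabla_b(\varphi_\delta)_0\rangle|$, where the last, genuinely CR, term has no Riemannian analogue and is what forces the introduction of $e_0$. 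The target curvature term is manifestly $\le 0$ when $K^N\le0$ (this is where hypothesis $(ii)$ enters), while in case $(i)$ it is controlled by $C\,e_\delta^{p/2}\cdot e_\delta$, which is small on the relevant scale once $E_p(u_0)$ is small, by the already-established Moser–Harnack inequality (Lemma \ref{l32}).

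Next I would derive the companion evolution equation for $e_0(\varphi_\delta)=g_{ij}(\varphi_\delta)_0^i(\varphi_\delta)_0^j$. Since $M$ is Sasakian the characteristic field $\mathbf{T}$ commutes (up to controlled lower-order terms) with the horizontal Laplacian, so $(\varphi_\delta)_0=\mathbf{T}\varphi_\delta$ satisfies a linear-parabolic-type equation with coefficients built from $e_\delta^{(p-2)/2}$; differentiating and using Kato's inequality yields $\partial_t e_0 \le \operatorname{div}_b\bigl(e_\delta^{(p-2)/2}\nabla_b e_0\bigr) + C\,e_\delta^{p/2}\,e_0 + C\,e_\delta^{(p-2)/2}\,|\nabla_b\varphi_\delta|\,|\nabla_b(\varphi_\delta)_0| + (\text{target curvature}\cdot e_\delta^{(p-2)/2}\,e_0\,e_\delta)$. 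The crucial point is that the bad mixed term in the $e_\delta$-equation is bilinear in $\nabla_b\varphi_\delta$ and $\nabla_b(\varphi_\delta)_0$, hence can be absorbed by the gradient term $|\nabla_b\sqrt{e_0}|^2$ appearing with a favorable sign in the $e_0$-equation, at the cost of a factor $\varepsilon$. Forming $\widehat{e_\delta}=e_\delta+\varepsilon e_0$ and choosing $\varepsilon$ small (depending only on $n,M,N$) makes all sign-indefinite terms in $\partial_t\widehat{e_\delta}$ either absorbed or bounded by $C\,e_\delta^{p/2}\,\widehat{e_\delta}$, so that
\[
\partial_t\widehat{e_\delta}\le \operatorname{div}_b\bigl(e_\delta^{(p-2)/2}\nabla_b\widehat{e_\delta}\bigr)+C\,e_\delta^{p/2}\,\widehat{e_\delta},
\]
with, in case $(ii)$, the curvature contribution simply dropped.

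To convert this differential inequality into the $L^\infty$ bound \eqref{t31}, I would run a Moser iteration on $\widehat{e_\delta}$, using the subelliptic Sobolev inequality on the closed Sasakian manifold $M$ and the energy bound $\int_M e_\delta^{p/2}\,d\mu\le p\,E_{p,1}(u_0)$, which follows from part $(iii)$. In case $(i)$ the smallness $E_p(u_0)\le\varepsilon_0$ ensures the iteration closes with the absorption constant under control — this is precisely the mechanism of \cite{fr1,fr2} — giving $\sup_{[0,T')\times M}\widehat{e_\delta}\le C(K,M,N)$ uniformly in $\delta$, hence both estimates in \eqref{t31} since $e_0\ge0$ and $e_\delta\le\widehat{e_\delta}$. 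In case $(ii)$, since the curvature term is gone, no smallness is needed and the same iteration yields the bound for all time; the initial condition $\|\nabla_b u_0\|_{L^\infty}\le K$ (plus $\|\mathbf{T}u_0\|_{L^\infty}$, which is controlled by $K$ and the $C^{2,\alpha}$ bound on $u_0$) supplies the starting value. Finally, part $(iii)$ is the standard energy identity: multiply \eqref{4a} by $\partial_t\varphi_\delta^k g_{ij}$, integrate over $M$, integrate by parts in the horizontal directions to recognize $\frac{d}{dt}E_{p,\delta}(\varphi_\delta)$, and integrate in $t$; the inequality $E_{p,\delta}(u_0)\le E_{p,1}(u_0)$ is immediate from $\delta\le1$ and monotonicity of $s\mapsto(|\nabla_b u_0|^2+s)^{p/2}$.

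The main obstacle is the mixed term $\langle J\nabla_b\varphi_\delta,\nabla_b(\varphi_\delta)_0\rangle$ in the Bochner identity: getting the constants right so that it is genuinely absorbable by the good gradient term in the $e_0$-equation — uniformly in $\delta\in(0,1)$ and in $p>1$ — is delicate, because the degenerate coefficient $e_\delta^{(p-2)/2}$ appears with different powers in the two evolution equations. Keeping the absorption constant independent of $\delta$ is what makes the eventual passage $\delta\to0$ (done elsewhere in the paper) possible, and is the technical heart of Theorem \ref{T31}.
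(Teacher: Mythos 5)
Your overall architecture coincides with the paper's: the CR Bochner formula applied to $f_\delta=|\nabla_b\varphi_\delta|^2+\delta$, a companion evolution inequality for the $\mathbf{T}$-energy density $e_0$, absorption of the mixed term $\langle J\nabla_b\varphi_\delta,\nabla_b(\varphi_\delta)_0\rangle$ by the good term $\varepsilon f_\delta^{(p-2)/2}|\nabla_b(\varphi_\delta)_0|^2$ via Young's inequality after forming $g=f_\delta+\varepsilon e_0$ (Lemma \ref{l32} and Corollary \ref{c1}), a Moser iteration based on the subelliptic Sobolev inequality, and the standard energy identity for part $(iii)$. Part $(ii)$ and part $(iii)$ of your sketch are essentially the paper's argument.

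There is, however, a genuine gap in your treatment of part $(i)$: you assert that ``the smallness $E_p(u_0)\le\varepsilon_0$ ensures the iteration closes,'' but the term that threatens the iteration is the supercritical one, $Cf_\delta^{(p+2)/2}=Ce_\delta^{p/2}\cdot e_\delta$, i.e.\ a coefficient that is a power of the unknown and is controlled by the energy inequality only in $L^{p/2}(M)$ uniformly in time. Since $p/2\le n+1=Q/2$ (with $Q=2n+2$ the homogeneous dimension), this integrability is below what Moser iteration needs; the paper instead requires the hypothesis $\sup_{0\le t<T'}\|g(t,\cdot)\|_{L^{n+1}(M)}\le\varepsilon_1$ in Lemmas \ref{l33} and \ref{l34}, precisely so that the Sobolev inequality lets the term $\int g^{\,r+p/2+1}\phi^k$ be absorbed into the left-hand side (the choice $\varepsilon_1=1/(2C_1C_2)$ in \eqref{cch102}). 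Converting smallness of the \emph{initial} $p$-energy into this \emph{time-uniform} $L^{n+1}$ smallness is not automatic: it is done by the interpolation $\|g(\cdot,t)\|_{L^{n+1}}^{n+1}\le\|g\|_{L^\infty}^{\,n+1-p/2}\|g(\cdot,t)\|_{L^{p/2}}^{p/2}$ together with an open--closed (continuity) argument in $t$ on $T^\ast=\sup\{\tilde T:\sup_{t\le\tilde T}\|g(t,\cdot)\|_{L^{n+1}}\le\varepsilon_1\}$, where the $L^\infty$ factor is supplied by Lemma \ref{l34} on $[0,T^\ast]$ and the $L^{p/2}$ factor by the energy inequality $(iii)$. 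Without this bootstrap your argument is circular (the $L^\infty$ bound is used to control the $L^{n+1}$ norm that is needed to prove the $L^\infty$ bound), and your remark that the curvature term is ``small on the relevant scale \ldots by the already-established Moser--Harnack inequality'' conflates the differential inequality of Lemma \ref{l32} with the a priori bound it is meant to produce. A secondary point worth noting: in the paper the absorption parameter $\varepsilon$ in $g=f_\delta+\varepsilon e_0$ is chosen depending on the iteration exponent $r$ (via $r^{-4}<\varepsilon<r^{-3}$), not once and for all depending only on $(n,M,N)$ as you claim, so the uniformity of the constants through the iteration also needs to be checked.
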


Based on (\ref{t31}), (\ref{t31b}) and the CR divergence theorem and CR
Green's identity as in \cite[Lemma 3.2. and Corollary 3.1.]{ccw}, it follows
from \cite{df}, \cite{d}, \cite{ch}, \cite{hs} and \cite{lsu} that we can
prove the global existence and asymptotic convergence of the $p$%
-pseudoharmonic map heat flow, provided that the sectional curvature of the
target manifold $N$ is nonpositive.

\begin{theorem}
\label{t1} Let $(M^{2n+1},J,\theta)$ be a closed Sasakian manifold and $(N,g)$
be a compact Riemannian manifold. \ If the sectional curvature is nonpositive
\[
K^{N}\leq0
\]
and $u_{0}\in C^{2,\alpha}(M,N)$, $0<\alpha<1,$ then there is a unique global
weak solution $\varphi$ of (\ref{4}) with $\partial_{t}\varphi \in
L^{2}(M\times \lbrack0,\infty))$ and $\varphi,\nabla_{b}\varphi \in C^{\beta
}(M\times \lbrack0,\infty),N)$, where $0<\beta<1$. Moreover, there exists a
sequence $t_{k}\rightarrow \infty$ such that $\varphi(t_{k})$ converges in
$C^{1,\beta^{\prime}}(M,N)$ for all $\beta^{\prime}<\beta$, to a weakly
$p$-pseudoharmonic map $\varphi_{\infty}\in$ $C^{1,\beta}(M,N)$ satisfying
\[
E_{p}(\varphi_{\infty})\leq E_{p}(u_{0}).
\]

\end{theorem}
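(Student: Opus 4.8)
The plan is to pass to the limit $\delta \to 0$ in the regularized flow \eqref{4a}, using Theorem \ref{T31} to obtain uniform estimates independent of $\delta$. First I would note that for each fixed $\delta \in (0,1)$ the regularized equation \eqref{4a} is a (degenerate-free, uniformly parabolic) quasilinear system; combining the curvature hypothesis $K^N \le 0$ with part (ii) of Theorem \ref{T31}, we get the $\delta$-independent bounds $\|\nabla_b \varphi_\delta\|_{L^\infty} \le C$ and $\|\mathbf{T}\varphi_\delta\|_{L^\infty} \le C$ on $[0,T')\times M$ for every $T'$, and by standard parabolic theory (the references \cite{lsu}, \cite{df}, \cite{d}, \cite{ch}, \cite{hs}) the solution $\varphi_\delta$ is smooth and exists globally on $M\times[0,\infty)$. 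The energy identity \eqref{t31b} then gives a uniform bound $\int_0^\infty\!\!\int_M |\partial_s \varphi_\delta|^2\,d\mu\,ds \le E_{p,1}(u_0)$ and $E_{p,\delta}(\varphi_\delta(\cdot,t)) \le E_{p,1}(u_0)$.

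Next I would extract a convergent subsequence. From the $L^\infty$ bound on $\nabla_b\varphi_\delta$ one upgrades to a uniform $C^{\beta}$ (parabolic Hölder) bound on $\varphi_\delta$ and on $\nabla_b\varphi_\delta$ in space-time, using the interior Schauder/De Giorgi--Nash--Moser estimates for the regularized system together with the uniform gradient bound that makes the principal part uniformly elliptic on the region where the estimates are applied; this is exactly where \cite{df}, \cite{d}, \cite{ch}, \cite{hs} enter. By Arzelà--Ascoli, along a subsequence $\delta_j \to 0$ we have $\varphi_{\delta_j} \to \varphi$ and $\nabla_b\varphi_{\delta_j} \to \nabla_b\varphi$ in $C^{\beta'}_{loc}(M\times[0,\infty))$ for $\beta' < \beta$, while $\partial_t\varphi_{\delta_j} \rightharpoonup \partial_t\varphi$ weakly in $L^2(M\times[0,\infty))$. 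Since $(|\nabla_b\varphi_{\delta_j}|^2+\delta_j)^{(p-2)/2} \to |\nabla_b\varphi|^{p-2}$ uniformly on compact sets (the uniform gradient bound controls the behavior even when $p<2$, because the regularization keeps everything bounded and the limit is continuous), one can pass to the limit in the weak formulation of \eqref{4a} and conclude that $\varphi$ is a global weak solution of \eqref{4} with $\partial_t\varphi\in L^2(M\times[0,\infty))$ and $\varphi,\nabla_b\varphi\in C^\beta$. Lower semicontinuity of $E_p$ under this convergence, together with the limit of \eqref{t31b}, yields the energy inequality $\int_0^t\!\!\int_M|\partial_s\varphi|^2 + E_p(\varphi(\cdot,t)) \le E_p(u_0)$.

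For the asymptotic convergence I would use the finiteness of $\int_0^\infty\!\!\int_M |\partial_s\varphi|^2$: this forces a sequence $t_k \to \infty$ with $\int_M |\partial_s\varphi(\cdot,t_k)|^2\,d\mu \to 0$. The uniform-in-time bounds $\|\nabla_b\varphi(\cdot,t)\|_{L^\infty}\le C$ and the parabolic Hölder estimates give that $\{\varphi(\cdot,t_k)\}$ is bounded in $C^{1,\beta}(M,N)$, so a further subsequence converges in $C^{1,\beta'}(M,N)$ for $\beta'<\beta$ to some $\varphi_\infty\in C^{1,\beta}(M,N)$. Writing the weak form of \eqref{4} at time $t_k$ and letting $k\to\infty$, the term $\int_M \partial_s\varphi\cdot\psi$ vanishes (by Cauchy--Schwarz and $\|\partial_s\varphi(\cdot,t_k)\|_{L^2}\to 0$), so $\varphi_\infty$ satisfies the weak form of the $p$-pseudoharmonic map equation \eqref{3}, i.e.\ it is a weakly $p$-pseudoharmonic map. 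The bound $E_p(\varphi_\infty)\le E_p(u_0)$ follows from lower semicontinuity of $E_p$ under $C^1$ convergence and the energy inequality. Uniqueness of the weak solution in this class is obtained by a standard energy (Gronwall) argument comparing two solutions with the same initial data, using the monotonicity of the $p$-Laplacian-type operator and the Lipschitz dependence of the $\tilde\Gamma$-term (here the $L^\infty$ bound on $\nabla_b\varphi$ is essential); this is the technical heart where one must be careful with the degenerate structure when $p\neq 2$.

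The step I expect to be the main obstacle is the passage to the limit in the principal term $\operatorname{div}_b\big((|\nabla_b\varphi_{\delta_j}|^2+\delta_j)^{(p-2)/2}\nabla_b\varphi_{\delta_j}^k\big)$: although the uniform gradient bound from Theorem \ref{T31} removes the worst degeneracy, one still needs the strong $C^\beta$ convergence of $\nabla_b\varphi_{\delta_j}$ (not merely weak convergence) to identify the limit, and establishing that uniform Hölder bound on the gradient of solutions of the regularized subelliptic system — uniformly in $\delta$ — requires invoking the CR De Giorgi--Nash--Moser / Schauder machinery of \cite{df}, \cite{d}, \cite{ch}, \cite{hs} in a setting where the sub-Laplacian replaces the Laplacian. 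Verifying that these Euclidean/Riemannian regularity results carry over to the Folland--Stein spaces on the Sasakian manifold, with constants independent of $\delta$, is the delicate point.
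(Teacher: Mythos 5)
Your proposal follows essentially the same route as the paper: part (ii) of Theorem \ref{T31} gives the $\delta$-independent bounds on $\nabla_b\varphi_\delta$ and $\mathbf{T}\varphi_\delta$, the cited regularity theory (\cite{df}, \cite{d}, \cite{ch}, \cite{hs}, \cite{lsu}) upgrades these to uniform $C^\beta$ bounds on $\varphi_\delta$ and $\nabla_b\varphi_\delta$, and one then passes to the limit $\delta_k\to 0$ in the weak formulation and extracts the asymptotic limit along $t_k\to\infty$ with $\|\partial_t\varphi(t_k,\cdot)\|_{L^2}\to 0$, exactly as in the paper's proof of Theorem \ref{t2} (to which the proof of Theorem \ref{t1} is reduced). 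Your identification of the delicate point — the $\delta$-uniform Hölder estimate for the gradient in the subelliptic setting — matches where the paper leans on the cited references.
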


Moreover, without the curvature assumption on the target manifold but with
small initial $p$-energy, we have

\begin{theorem}
\label{t2} Let $(M^{2n+1},J,\theta)$ be a closed Sasakian manifold and $(N,g)$
be a compact Riemannian manifold. Let $u_{0}\in C^{2,\alpha}(M,N)$,
$0<\alpha<1$ and $||\nabla_{b}u_{0}||_{L^{\infty}(M)}\leq K$. There exists
$\varepsilon_{0}>0$ depending only on $K,M,N$ such that if
\[
E_{p}(u_{0})=\frac{1}{p}\int_{M}|\nabla_{b}u_{0}|^{p}d\mu \leq \varepsilon
_{0},\
\]
then there is a unique global weak solution $\varphi$ of (\ref{4}) with
$\partial_{t}\varphi \in L^{2}(M\times \lbrack0,\infty))$ and $\varphi
,\nabla_{b}\varphi \in C^{\beta}(M\times \lbrack0,\infty),N)$, where $0<\beta
<1$. Moreover, there exists a sequence $t_{k}\rightarrow \infty$ such that
$\varphi(t_{k})$ converges in $C^{1,\beta^{\prime}}(M,N)$ for all
$\beta^{\prime}<\beta$, to a weakly p-pseudoharmonic map $\varphi_{\infty}\in$
$C^{1,\beta}(M,N)$ satisfying
\[
E_{p}(\varphi_{\infty})\leq E_{p}(u_{0}).
\]
Moreover, there exists $\overline{\varepsilon_{0}}>0$ depending only on
$K,M,N\ $and$\ p$ such that if in addition $E_{p}(u_{0})\leq \overline
{\varepsilon_{0}}$, then $\varphi_{\infty}$ is a constant map.
\end{theorem}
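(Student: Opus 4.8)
The plan is to derive Theorem \ref{t2} from the uniform estimates of Theorem \ref{T31}(i) and the energy inequality (\ref{t31b}), following the same scheme used to prove Theorem \ref{t1} but now with the smallness hypothesis replacing the curvature sign. First, under the assumption $E_p(u_0)\le\varepsilon_0$ (with $\varepsilon_0$ as in Theorem \ref{T31}(i)), the regularized flow (\ref{4a}) has solutions $\varphi_\delta$ on $[0,T_\delta)$ with $\|\nabla_b\varphi_\delta\|_{L^\infty}\le C$ and $\|\mathbf{T}\varphi_\delta\|_{L^\infty}\le C$ uniformly in $\delta$, and these bounds are global in time, so $T_\delta=\infty$; combined with (\ref{t31b}), which gives $\int_0^\infty\int_M|\partial_s\varphi_\delta|^2\,d\mu\,ds\le E_{p,1}(u_0)$ uniformly in $\delta$. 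The next step is to pass to the limit $\delta\to 0$: using the uniform gradient bound together with parabolic regularity for the (now uniformly parabolic, since $|\nabla_b\varphi_\delta|^2+\delta$ is bounded above and the relevant degeneracy is controlled) system, one extracts interior Schauder-type estimates in the horizontal and time directions, obtaining $\varphi_\delta,\nabla_b\varphi_\delta\in C^\beta(M\times[0,\infty))$ uniformly, and then a subsequence converges to a weak solution $\varphi$ of (\ref{4}) with $\partial_t\varphi\in L^2(M\times[0,\infty))$ and $\varphi,\nabla_b\varphi\in C^\beta$. Uniqueness follows as in \cite{cc1} from an energy comparison for two solutions with the same initial data, using the monotonicity of the $p$-Laplacian-type operator.

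For the asymptotic convergence, since $\int_0^\infty\|\partial_s\varphi(\cdot,s)\|_{L^2(M)}^2\,ds<\infty$, there is a sequence $t_k\to\infty$ with $\|\partial_s\varphi(\cdot,t_k)\|_{L^2(M)}\to 0$. The uniform-in-time bounds $\|\nabla_b\varphi(t)\|_{L^\infty}\le C$, $\|\mathbf{T}\varphi(t)\|_{L^\infty}\le C$ give, via the CR divergence theorem and CR Green's identity (as in \cite[Lemma 3.2, Corollary 3.1]{ccw}) together with interior regularity for the $p$-sublaplacian system, a uniform $C^{1,\beta}$ bound on $\varphi(t_k)$. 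By Arzelà–Ascoli we may pass to a further subsequence converging in $C^{1,\beta'}(M,N)$, $\beta'<\beta$, to a limit $\varphi_\infty\in C^{1,\beta}(M,N)$; since $\partial_s\varphi(\cdot,t_k)\to 0$ in $L^2$, the limit satisfies the weak form of (\ref{3}), i.e.\ $\varphi_\infty$ is a weakly $p$-pseudoharmonic map. The energy bound $E_p(\varphi_\infty)\le E_p(u_0)$ comes from lower semicontinuity of $E_p$ under the $C^{1,\beta'}$ convergence together with the monotonicity $E_p(\varphi(\cdot,t))\le E_p(u_0)$ read off from (\ref{t31b}) by letting $\delta\to0$.

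For the last assertion that $\varphi_\infty$ is constant when $E_p(u_0)\le\overline{\varepsilon_0}$ for a possibly smaller $\overline{\varepsilon_0}$, the idea is a CR Sobolev/Poincaré gap argument: on a closed Sasakian manifold there is a constant $C_S=C_S(M,p)$ such that for any $w\in S^{1,p}(M)$, $\|w-\bar w\|_{L^{p^*}}\le C_S\|\nabla_b w\|_{L^p}$ with $p^*=\tfrac{(2n+2)p}{2n+2-p}>p$ (for $p<2n+2$; otherwise use any $p^*>p$). Testing the weak $p$-pseudoharmonic equation (\ref{3}) for $\varphi_\infty$ against $\varphi_\infty-\bar\varphi_\infty$ (in a normal coordinate patch on $N$, using the uniform $C^0$ smallness of $\varphi_\infty$ which follows from $E_p$ smallness plus the gradient bound), the curvature/Christoffel term is bounded by $c\int_M|\nabla_b\varphi_\infty|^p|\varphi_\infty-\bar\varphi_\infty|$, and Hölder plus the Sobolev inequality turns this into $\le c\,C_S E_p(\varphi_\infty)^{\theta}\|\nabla_b\varphi_\infty\|_{L^p}^p$ for some $\theta>0$; since $E_p(\varphi_\infty)\le E_p(u_0)\le\overline{\varepsilon_0}$, choosing $\overline{\varepsilon_0}$ so that $c\,C_S\overline{\varepsilon_0}^\theta<1$ forces $\int_M|\nabla_b\varphi_\infty|^p\,d\mu=0$, hence $\nabla_b\varphi_\infty\equiv0$; on a connected Sasakian manifold the horizontal distribution is bracket-generating, so $\nabla_b\varphi_\infty\equiv0$ implies $\varphi_\infty$ is constant.

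The main obstacle I expect is the regularity and compactness needed for the $\delta\to0$ limit and for the $t_k\to\infty$ subsequence: the limiting equation (\ref{4}) is genuinely degenerate where $\nabla_b\varphi$ vanishes, so one cannot use uniformly parabolic Schauder theory directly on the limit. The way around this is to keep all estimates at the level of the regularized flow (\ref{4a}), where Theorem \ref{T31} already furnishes $\delta$-uniform $L^\infty$ bounds on $\nabla_b\varphi_\delta$ and $\mathbf{T}\varphi_\delta$, and then invoke the De Giorgi–Nash–Moser and interior $C^{1,\beta}$ theory for degenerate subelliptic/parabolic systems of $p$-Laplacian type (the references \cite{df}, \cite{d}, \cite{ch}, \cite{hs}, \cite{lsu} cited just before Theorem \ref{t1}) to get $\delta$-independent Hölder bounds that survive the limit. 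A secondary technical point is justifying the test-function computation for $\varphi_\infty$ rigorously given only $C^{1,\beta}$ regularity — this is handled by working with the weak formulation throughout and noting that $\varphi_\infty-\bar\varphi_\infty$ composed with normal coordinates on $N$ is an admissible test function because of the uniform $C^0$ smallness.
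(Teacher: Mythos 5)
Your proposal is correct. For the global existence, the $\delta\to0$ limit, and the extraction of $t_k$ it follows essentially the paper's route: uniform bounds from Theorem \ref{T31}$(i)$ and the energy inequality (\ref{t31b}), $\delta$-independent H\"older estimates from the degenerate parabolic regularity theory of \cite{df}, \cite{d}, \cite{ch}, \cite{lsu}, then compactness. (One caveat: your parenthetical claim that the regularized system is \emph{uniformly} parabolic is not accurate for $p>2$, since the coefficient $(|\nabla_b\varphi_\delta|^2+\delta)^{\frac{p-2}{2}}$ degenerates from below as $\delta\to0$; but you correct this yourself in your final paragraph, which is exactly the paper's stance.) Where you genuinely diverge is the constancy of $\varphi_\infty$. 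The paper, like you, first uses small $E_p(u_0)$ plus the uniform gradient bound to get a small $S^{1,q}$ norm for some $q>Q=2n+2$ and hence, by the Folland--Stein embedding, small oscillation of $\varphi_\infty$, so that $\varphi_\infty(M)$ lies in a convex geodesic ball of $N$; it then concludes by Gordon's convexity trick \cite{gor}: integrating $\operatorname{div}_{b}(|\nabla_b\varphi_\infty|^{p-2}\nabla_b(F\circ\varphi_\infty))$ with $F(y)=\sum_j y_j^2$ gives $\int_M|\nabla_b\varphi_\infty|^p d\mu=0$ outright. You instead test the weak equation against $\varphi_\infty-\bar\varphi_\infty$ and absorb the Christoffel (second fundamental form) term via H\"older and the CR Sobolev--Poincar\'e inequality; the exponent bookkeeping is sound ($\theta=\frac{1}{(p^*)'}+\frac{1}{p}-1>0$ precisely because $p^*>p$), so this closes. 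The trade-off: the convexity argument uses smallness only to confine the image and needs no further quantitative absorption, whereas your gap argument requires a second, quantitative smallness of $E_p(\varphi_\infty)$ to absorb, but avoids convex geodesic balls (and could even be run in the ambient $\mathbb{R}^l$ against the constant $\bar\varphi_\infty$, sidestepping the single-chart issue). Either way, be explicit that the confinement of the image --- via $S^{1,q}\hookrightarrow C^{\frac12(1-\frac{Q}{q})}$ for $q>Q$ --- must be established before the test-function computation; that is the one step your sketch treats too briefly.
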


\begin{remark}
1. One may also compare (\ref{4}) to the well-known $p$-harmonic map heat flow
from a closed Riemannian manifold $(M,h_{ij})$ to compact Riemannian manifold
$(N^{m},g_{ij})$\ :
\begin{equation}
\left \{
\begin{array}
[c]{l}%
\frac{\partial u}{\partial t}=\triangle_{p}^{M}u+|\nabla u|^{p-2}A(u)(\nabla
u,\nabla u),\\
u(\omega,0)=u_{0}(\omega),
\end{array}
\right.  \label{5}%
\end{equation}
where $A$ is the second fundamental form of $N$ in $R^{m+k}$. \ In the papers
of \cite{fr1} and \cite{fr2}, A. Fardoun and R. Regabaoui proved that if the
sectional curvature $K^{N}$ is nonpositive, then the heat flow (\ref{5}) has a
unique global weak solution $u$. Moreover, $u(t)$ converges to a $p$-harmonic
map $u_{\infty}$. Without the curvature assumption on the target manifold,
they showed that if $u_{0}\in C^{2,\alpha}(M,N)$, $0<\alpha<1$ with the small
$p$-energy for $\ p\leq \dim M,$ then there exists a unique global weak
solution $u$ of (\ref{5}). Moreover, as $t\rightarrow \infty$, $u(t)$ converges
to a constant map. \ For the further study of global weak solutions to the
$p$-harmonic map heat flow (\ref{5}), we refer to \cite{chh}, \cite{h1},
\cite{h2}, etc for some details. We also refer to \cite{hl} for the gradient
estimate of the minimizing $p$-harmonic map.

2. Since $(2n+2)$ is homogeneous dimnesion of a pseudohermitian manifold
$M^{2n+1},$ then when $p>2n+2,$ we do not need the smallness assumption in
Theorem \ref{t2}. By following the same steps, the result of Theorem \ref{t2}
holds for any smooth initial data. \ Thus we may assume that $p\leq2n+2$ in
the proof.

3. In the paper of \cite[Remark 5.3.]{cdry}, the second named author proved
that $f:M\rightarrow N$ is harmonic if and only if $f$ is pseudoharmonic
whenever $M$ is Sasakian and $N$ is a Riemannian manifold with nonpositive
curvature. It is interesting to know whether it is true for $p$-harmonic maps
and $p$-pseudoharmonic maps.
\end{remark}

\textbf{Acknowledgments. }The third named author would like to express his
thanks to Taida Institute for Mathematical Sciences (TIMS), National Taiwan
University. Part of the project was done during his visit to TIMS.

\section{Preliminaries}

We first introduce some basic materials on pseudohermitian $(2n+1)$-manifolds
(\cite{l}). Let $(M,\xi)$ be a $(2n+1)$-dimensional, orientable, contact
manifold with contact structure $\xi$, $dim_{R}\xi=2n$. A CR structure
compatible with $\xi$ is an endomorphism $J:\xi \rightarrow \xi$ such that
$J^{2}=-id$. We also assume that $J$ satisfies the following integrability
condition: if $X$ and $Y$ are in $\xi$, then so are $[JX,Y]+[X,JY]$ and
$J([JX,Y]+[X,JY])=[JX,JY]-[X,Y]$. A CR structure $J$ can extend to
$C\otimes \xi$, and decomposes $C\otimes \xi$ into the direct sum of $T_{1,0}M$
and $T_{0,1}M$, which are eigenspaces of $J$ with respect to eigenvalues $i$
and $-i$, respectively. A manifold $M$ with a CR structure is called a CR
manifold. A pseudohermitian structure compatible with $\xi$ is a CR structure
$J$ with $\xi$ together with a choice of contact form $\theta$. Such a choice
determines a unique real vector field $T$ transverse to $\xi$, which is called
the characteristic vector field of $\theta$, such that $\theta(T)=1$ and
$L_{T}\theta=0$. Let $\{T,Z_{\alpha},Z_{\bar{\alpha}}\}$ be a frame of
$TM\otimes C$, where $Z_{\alpha}$ is any local frame of $T_{1,0}M$,
$Z_{\bar{\alpha}}\in T_{0,1}M$, and $\mathbf{T}$ is the charecteristic vector
field. Then, $\{ \theta,\theta^{\alpha},\theta^{\bar{\theta}}\}$, which is the
coframe dual to $\{T,Z_{\alpha},Z_{\bar{\alpha}}\}$, satisfies $d\theta
=ih_{\alpha \bar{\beta}}\theta^{\alpha}\wedge \theta^{\bar{\beta}}$, for some
positive definite hermitian matrix of functions $(h_{\alpha \bar{\beta}})$.
Locally, one can choose $Z_{\alpha}$ appropriately so that $h_{\alpha
\bar{\beta}}=\delta_{\alpha \beta}$ to simplify tensorial calculation.

The Levi form $\langle,\rangle_{L_{\theta}}$ is the Hermitian form on
$T_{1,0}M$ defined by
\[
\langle Z,W\rangle_{L_{\theta}}=-i\langle d\theta,Z\wedge \bar{W}\rangle.
\]
We can extend $\langle,\rangle_{L_{\theta}}$ to $T_{0,1}M$ by defining
$\langle \bar{Z},\bar{W}\rangle_{L_{\theta}}=\bar{\langle,Z,W\rangle
_{L_{\theta}}}$ for all $Z,W\in T_{1,0}M$. The Levi form induces naturally a
Hermitian form on the dual bundle of $T_{1,0}M$, denoted by $\langle
,\rangle_{L_{\theta}^{\ast}}$, and hence on all the induced tensor bundles.
Integrating the Hermitian form over $M$ with respect to the volume form
$d\mu=\theta \wedge(d\theta)^{n}$, we get an inner product on the space of
sections of each tensor bundle. We denote the inner product by notation
$\langle,\rangle$. For example,
\[
\langle u,v\rangle=\int_{M}u\bar{v}d\mu,
\]
for functions $u$ and $v$.

The pseudohermitian connection of $(J,\theta)$ is the connection $\nabla$ on
$TM\otimes C$ (and extended to tensors) given in terms of local frames
$Z_{\alpha}\in T_{1,0}M$ by $\nabla Z=\omega_{\alpha}^{\beta}\otimes Z_{\beta
}$, $\nabla Z_{\bar{\alpha}}=\omega_{\bar{\alpha}}^{\bar{\beta}}\otimes
Z_{\bar{\beta}}$, $\nabla T=0$, where $\omega_{\alpha}^{\beta}$ are the
1-forms uniquely determined by the following equations:
\begin{align*}
d\theta^{\beta}  &  =\theta^{\alpha}\wedge \omega_{\alpha}^{\beta}+\theta
\wedge \tau^{\beta}\\
0  &  =\tau_{\alpha}\wedge \theta^{\alpha}\\
0  &  =\omega_{\alpha}^{\beta}+\omega_{\bar{\beta}}^{\bar{\alpha}}.
\end{align*}
We can write (by Cartan lemma) $\tau_{\alpha}=A_{\alpha \gamma}\theta^{\gamma}%
$, with $A_{\alpha \gamma}=A_{\gamma \alpha}$ the pseudohermitian torsion of
$(M,J,\theta)$. The curvature of this Tanaka-Webster connection, expressed in
terms of the coframe $\{ \theta=\theta^{0}=\theta,\theta^{\alpha},\theta
^{\bar{\alpha}}\}$, is
\begin{align*}
&  \Pi_{\beta}^{\alpha}=\overline{\Pi_{\bar{\beta}}^{\bar{\alpha}}}%
=d\omega_{\beta}^{\alpha}-\omega_{\beta}^{\gamma}\wedge \omega_{\gamma}%
^{\alpha},\\
&  \Pi_{0}^{\alpha}=\Pi_{\alpha}^{0}=\Pi_{0}^{\bar{\beta}}=\Pi_{\bar{\beta}%
}^{0}=\Pi_{0}^{0}=0.
\end{align*}
Webster showed that $\Pi_{\beta}^{\alpha}$ can be written
\[
\Pi_{\beta}^{\alpha}=R_{\beta \rho \bar{\sigma}}^{\alpha}\theta^{\rho}%
\wedge \theta^{\bar{\sigma}}+W_{\beta \rho}^{\alpha}\theta^{\sigma}\wedge
\theta-W_{\beta \bar{\rho}}^{\alpha}\theta^{\bar{\rho}}\wedge \theta
+i\theta_{\beta}\wedge \tau^{\alpha}-i\tau_{\beta \wedge \theta^{\alpha}},
\]
where the coefficients satisfy
\[
R_{\beta \bar{\alpha}\rho \bar{\sigma}}=\overline{R_{\alpha \bar{\beta}%
\bar{\sigma}\rho}}=R_{\bar{\alpha}\beta \bar{\sigma}\rho}=R_{\rho \bar{\alpha
}\beta \bar{\sigma}},\quad W_{\beta \bar{\alpha}\gamma}=A_{\beta \gamma
,\bar{\alpha}}.
\]

We will denote components of covariant derivatives with indices preceded by
comma; thus write $A_{\alpha \beta;\gamma}$. The indices $\{0,\alpha
,\bar{\alpha}\}$ indicate derivatives with respect to $\{T,Z_{\alpha}%
,Z_{\bar{\alpha}}\}$. For derivatives of a scalar function, we will often omit
the comma, for instance, $\varphi_{\alpha}=Z_{\alpha}\varphi$, $\varphi
_{\alpha \bar{\beta}}=Z_{\bar{\beta}}Z_{\alpha}\varphi-\omega_{\alpha}^{\gamma
}(Z_{\bar{\beta}})Z_{\gamma}\varphi$, $\varphi_{0}=\mathbf{T}\varphi$ for a
smooth function $\varphi$.

For a smooth real function $\varphi$, the subgradient $\nabla_{b}$ is defined
by $\nabla_{b}\varphi \in \xi$ and $<Z,\nabla_{b}\varphi \rangle_{L_{\theta}%
}=d\varphi(Z)$ for all vector fields $Z$ tangent to contact plane. Locally
$\nabla_{b}\varphi=\sum_{\alpha}(\varphi_{\bar{\alpha}}Z_{\alpha}%
+\varphi_{\alpha}Z_{\bar{\alpha}})$. We can use the connection to define the
subhessian as the complex linear map
\[
(\nabla^{H})^{2}\varphi:T_{1,0}\oplus T_{0,1}\rightarrow T_{1,0}\oplus T_{0,1}%
\]
by
\[
(\nabla^{H})^{2}\varphi(Z)=\nabla_{Z}\nabla_{b}\varphi.
\]
We also define the subdivergence operator $\operatorname{div}_{b}(\cdot)$ by
\[
\operatorname{div}_{b}(W)=W^{\beta},_{\beta}+W^{\overline{\beta}}%
,_{\overline{\beta}}%
\]
for all vector fields $W=W^{\beta}Z_{\beta}+W^{\overline{\beta}}%
Z_{\overline{\beta}}$. In particular
\[
|\nabla_{b}\varphi|^{2}=2\varphi_{\alpha}\varphi_{\bar{\alpha}},\quad
|\nabla_{b}^{2}\varphi|^{2}=2(\varphi_{\alpha \beta}\varphi_{\bar{\alpha}%
\bar{\beta}}+\varphi_{\alpha \bar{\beta}}\varphi_{\bar{\alpha}\beta})
\]
and
\[
\triangle_{b}\varphi=\operatorname{div}_{b}(\nabla_{b}\varphi)=(\varphi
_{\alpha \bar{\alpha}}+\varphi_{\bar{\alpha}\alpha}).
\]

We also recall below what the Folland-Stein space $S^{k,p}$ is. Let $D$ denote
a differential operator acting on functions. We say $D$ has weight $m$,
denoted $w(D)=m,$ if $m$ is the smallest integer such that $D$ can be locally
expressed as a polynomial of degree $m$ in vector fields tangent to the
contact bundle $\xi$. We define the Folland-Stein space $S^{k,p}$ of functions
on $M$ by
\[
S^{k,p}=\{f\in L^{p}:Df\in L^{p}\text{ \  \textrm{whenever} \ }w(D)\leq k\}.
\]
We define the $L^{p}$ norm of $\nabla_{b}f,$ $\nabla_{b}^{2}f$, ... to be
($\int|\nabla_{b}f|^{p}\theta \wedge(d\theta)^{n})^{1/p},$ ($\int|\nabla
_{b}^{2}f|^{p}\theta \wedge(d\theta)^{n})^{1/p},$ $...,$ respectively, as
usual. So it is natural to define the $S^{k,p}$ norm of $f\in S^{k,p}$ as
follows:
\[
||f||_{S^{k,p}}\equiv(\sum_{0\leq j\leq k}||\nabla_{b}^{j}f||_{L^{p}}%
^{p})^{1/p}.
\]
The function space $S^{k,p}$ with the above norm is a Banach space for
$k\geq0,$ $1<p<\infty.$ There are also embedding theorems of Sobolev type. The
reader can make reference to \cite{f} and \cite{fs} for more details of these spaces.

In this paper, we embed $N$ isometrically into the Euclidean space
$\mathbb{R}^{l}$ with $l$ large enough and then $S^{k,p}=S^{k,p}%
(M,\mathbb{R}^{l})$. Let $\pi:\mathbb{R}^{l}\rightarrow N$ \ be a smooth
projection. Define $S^{k,p}(M,N)$ by $\pi \big(S^{k,p}\big)$ (similarly,
$Lip(M,N):=\pi \big(Lip(M,\mathbb{R}^{l})\big)$ and so do other spaces of maps
from $M$ to $N$). From now on, the upper indices $j$'s of $\{ \varphi
^{j},d\varphi^{j},\cdots \}$ start from $1$ to $l$ if we do not specify them.

\section{Moser-type Harnack Inequality}

In this section, we derive the Moser-type Harnack inequality (\cite{m},
\cite{cc1}) for the total regularized energy density%
\[
g:=f_{\delta}+\varepsilon e_{0}%
\]
with
\[
f_{\delta}:=|\nabla_{b}\varphi|^{2}+\delta.
\]

Let $\varphi:(M,J,\theta)\rightarrow(N,g_{ij})$ be a map from $(M,J,\theta)$
to $(N,g_{ij})$. We first derive the Euler-Lagrange equation associated to its
$p$-energy $E_{p}(\varphi).$

\begin{lemma}
\label{l31} Let $(M,J,\theta)$ be a closed pseudohermitian manifold and
$(N^{m},g)$ be a Riemannian manifold. A $C^{2}$ map $\varphi:(M,J,\theta
)\rightarrow(N,g)$ is $p$-pseudoharmonic if and only if it satisfies the
Euler-Lagrangian equations
\begin{equation}
\triangle_{b,p}\varphi^{k}+2|\nabla_{b}\varphi|^{p-2}\widetilde{\Gamma}%
_{ij}^{k}\varphi_{\alpha}^{i}\varphi_{\bar{\alpha}}^{j}=0,\  \  \ k=1,...,m
\label{100}%
\end{equation}
where $\triangle_{b,p}\varphi^{k}=\operatorname{div}_{b}(|\nabla_{b}%
\varphi|^{p-2}\nabla_{b}\varphi^{k})$.
\end{lemma}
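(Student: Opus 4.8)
The plan is to compute the first variation of the $p$-energy $E_{p}$ and to show that its vanishing along all variations is equivalent to (\ref{100}). Fix $x\in M$ together with coordinate charts $U_{x}\subset M$, $V_{\varphi(x)}\subset N$ as in the introduction, and let $\{\varphi_{t}\}_{|t|<\epsilon}$ be a smooth one-parameter family with $\varphi_{0}=\varphi$ and $\varphi_{t}(U_{x})\subset V_{\varphi(x)}$ whose variation field $V^{k}=\tfrac{\partial\varphi_{t}^{k}}{\partial t}\big|_{t=0}$ is an arbitrary smooth section of $\varphi^{-1}TN$ supported in $U_{x}$; by a partition of unity it suffices to treat such localized variations. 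Since $M$ is closed and $(t,\omega)\mapsto\varphi_{t}(\omega)$ is smooth, one may differentiate $E_{p}(\varphi_{t})=\frac{1}{p}\int_{M}e(\varphi_{t})^{p/2}\,d\mu$ under the integral sign by dominated convergence: the pointwise $t$-derivative of $e(\varphi_{t})^{p/2}$ is controlled by $C\,e(\varphi_{t})^{(p-1)/2}$, which stays bounded because $p>1$, so the argument is legitimate even where $\nabla_{b}\varphi$ vanishes (for $1<p<2$ one reads (\ref{100}) on the open set $\{\nabla_{b}\varphi\neq0\}$, the usual convention for the $p$-sublaplacian).

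Using $\tfrac{\partial}{\partial t}\big[g_{ij}(\varphi_{t})\big]=\partial_{k}g_{ij}(\varphi_{t})\,\tfrac{\partial\varphi_{t}^{k}}{\partial t}$ and the chain rule for $e(\varphi_{t})^{p/2}$, one gets
\[
\frac{d}{dt}\Big|_{t=0}E_{p}(\varphi_{t})=\tfrac{1}{4}\int_{M}e(\varphi)^{\frac{p-2}{2}}h^{\alpha\bar{\beta}}\Big[\partial_{k}g_{ij}\,V^{k}\varphi_{\alpha}^{i}\varphi_{\bar{\beta}}^{j}+g_{ij}\big(V_{,\alpha}^{i}\varphi_{\bar{\beta}}^{j}+\varphi_{\alpha}^{i}V_{,\bar{\beta}}^{j}\big)\Big]\,d\mu,
\]
where $g_{ij},\partial_{k}g_{ij}$ are evaluated at $\varphi$ and $V_{,\alpha}^{i}=Z_{\alpha}V^{i}$. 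I would then integrate by parts in the last two terms, using the CR divergence theorem on the closed manifold $M$ (so that no boundary term arises) to move $Z_{\alpha}$, respectively $Z_{\bar{\beta}}$, off $V$: each such term becomes $-\int_{M}V^{i}\,\operatorname{div}_{b}\!\big(e(\varphi)^{(p-2)/2}g_{ij}(\varphi)\nabla_{b}\varphi^{j}\big)\,d\mu$ up to the fixed constant, and expanding the divergence past the scalar $g_{ij}(\varphi)$ produces $g_{ij}(\varphi)\operatorname{div}_{b}\!\big(e(\varphi)^{(p-2)/2}\nabla_{b}\varphi^{j}\big)$ — which equals $g_{ij}(\varphi)\triangle_{b,p}\varphi^{j}$ up to a constant, since $e(\varphi)^{(p-2)/2}$ is a constant multiple of $|\nabla_{b}\varphi|^{p-2}$ — together with a term proportional to $e(\varphi)^{(p-2)/2}\partial_{k}g_{ij}\,h^{\alpha\bar{\beta}}\varphi_{\alpha}^{k}\varphi_{\bar{\beta}}^{j}$.

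Collecting all contributions, $\frac{d}{dt}\big|_{t=0}E_{p}(\varphi_{t})=-c\int_{M}V^{i}\,g_{ij}(\varphi)\big[\triangle_{b,p}\varphi^{j}+2|\nabla_{b}\varphi|^{p-2}h^{\alpha\bar{\beta}}\widetilde{\Gamma}_{lk}^{j}\varphi_{\alpha}^{l}\varphi_{\bar{\beta}}^{k}\big]\,d\mu$ for a fixed $c>0$. The one point requiring care is to combine the $\partial g$-term coming from varying $g_{ij}(\varphi_{t})$ inside the energy density with the $\partial g$-term coming from pushing $\operatorname{div}_{b}$ past $g_{ij}(\varphi)$, then relabel indices and symmetrize in $l\leftrightarrow k$, so that — through $\widetilde{\Gamma}_{lk}^{j}=\tfrac{1}{2}g^{js}(\partial_{l}g_{sk}+\partial_{k}g_{sl}-\partial_{s}g_{lk})$ — these two contributions assemble exactly into $g_{ij}(\varphi)\cdot2|\nabla_{b}\varphi|^{p-2}h^{\alpha\bar{\beta}}\widetilde{\Gamma}_{lk}^{j}\varphi_{\alpha}^{l}\varphi_{\bar{\beta}}^{k}$; in the normalization $h_{\alpha\bar{\beta}}=\delta_{\alpha\beta}$ one has $h^{\alpha\bar{\beta}}\varphi_{\alpha}^{l}\varphi_{\bar{\beta}}^{k}=\varphi_{\alpha}^{l}\varphi_{\bar{\alpha}}^{k}$, which matches the form in (\ref{100}). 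Since $V$ ranges over all smooth sections of $\varphi^{-1}TN$ and $(g_{ij})$ is positive definite, the fundamental lemma of the calculus of variations gives that $\frac{d}{dt}\big|_{t=0}E_{p}(\varphi_{t})=0$ for every such variation if and only if $\varphi$ satisfies (\ref{100}); equivalently, $\varphi$ is $p$-pseudoharmonic if and only if (\ref{100}) holds. The argument is essentially bookkeeping; the only real obstacle is organizing the several $\partial g$-terms so that they coalesce into the Christoffel symbols, there being no analytic difficulty beyond the dominated-convergence remark above.
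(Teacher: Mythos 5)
Your proposal is correct and follows essentially the same route as the paper: compute the first variation of $E_{p}$ along a smooth variation, integrate by parts via the CR divergence theorem on the closed manifold, assemble the metric-derivative terms into the Christoffel symbols, and conclude by nondegeneracy of $(g_{ij})$ and arbitrariness of the variation field. The extra remarks you add (dominated convergence to justify differentiating under the integral, and the caveat about points where $\nabla_{b}\varphi=0$ when $1<p<2$) are sound refinements that the paper's computation passes over silently.
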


\begin{proof}
Let $\varphi_{t}$, $-\varepsilon<t<\varepsilon$, be a smooth variation of
$\varphi$ so that
\[
\varphi_{0}=\varphi \quad \text{and}\quad \frac{d\varphi_{t}}{dt}|_{t=0}%
=V\in \Gamma(\varphi^{-1}TN).
\]
$\varphi_{t}$ may be viewed as a map from $(-\varepsilon,\varepsilon)\times M$
into $N$. By direct computations, we have
\begin{align*}
&  \frac{dE_{p}(\varphi_{t})}{dt}=\frac{1}{p}\frac{d}{dt}\int_{M}|\nabla
_{b}\varphi_{t}|^{p}d\mu \\
&  =\frac{1}{2}\int_{M}|\nabla_{b}\varphi_{t}|^{p-2}\frac{d}{dt}|\nabla
_{b}\varphi_{t}|^{2}d\mu=\int_{M}|\nabla_{b}\varphi_{t}|^{p-2}\frac{d}%
{dt}[g_{ij}\varphi_{t\alpha}^{i}\varphi_{t\bar{\alpha}}^{j}]d\mu \\
&  =\int_{M}|\nabla_{b}\varphi_{t}|^{p-2}[g_{ij,k}\varphi_{t\alpha}^{i}%
\varphi_{t\bar{\alpha}}^{j}\frac{d\varphi_{t}^{k}}{dt}+g_{ij}(\frac
{d\varphi_{t}^{i}}{dt})_{t\alpha}\varphi_{t\bar{\alpha}}^{j}+g_{ij}%
(\frac{d\varphi_{t}^{j}}{dt})_{\bar{\alpha}}\varphi_{\alpha}^{i}]d\mu \\
&  =-\int_{M}[|\nabla_{b}\varphi_{t}|^{p-2}\triangle_{b}\varphi_{t}%
^{k}+\langle \nabla_{b}|\nabla_{b}\varphi_{t}|^{p-2},\nabla_{b}\varphi_{t}%
^{k}\rangle+2|\nabla_{b}\varphi_{t}|^{p-2}\widetilde{\Gamma}_{ij}^{k}%
\varphi_{t\alpha}^{i}\varphi_{t\bar{\alpha}}^{j}]\frac{d\varphi_{t}^{k}}%
{dt}d\mu \\
&  =-\int_{M}[\triangle_{b,p}\varphi_{t}^{k}+2|\nabla_{b}\varphi_{t}%
|^{p-2}\widetilde{\Gamma}_{ij}^{k}\varphi_{t\alpha}^{i}\varphi_{t\bar{\alpha}%
}^{j}]\frac{d\varphi_{t}^{k}}{dt}d\mu \\
&  =-\int_{M}\langle \frac{d\varphi_{t}}{dt},\tau_{p}(\varphi_{t})\rangle d\mu.
\end{align*}
Thus, the first variational formula is given by
\[
\frac{d}{dt}E_{p}(\varphi_{t})|_{t=0}=-\int_{M}\langle V,\tau_{p}%
(\varphi)\rangle d\mu,
\]
where $\tau_{p}(\varphi)$ is called the $p$-tension field of $\varphi$, which
is defined by
\[
\tau_{p}(\varphi)=\sum_{k=1}^{m}[\triangle_{b,p}\varphi^{k}+2|\nabla
_{b}\varphi|^{p-2}\widetilde{\Gamma}_{ij}^{k}\varphi_{\alpha}^{i}\varphi
_{\bar{\alpha}}^{j}]\frac{\partial}{\partial y_{k}}.
\]
Therefore $\varphi \in C^{2}(M;N)$ is a critical point of the $p$-energy
functional $E_{p}(\varphi)$ if and only if its $p$-tension field $\tau_{p}(u)$
vanishes identically. That is, $\varphi$ is $p$-pseudoharmonic if and only if
it satisfies the Euler-Lagrange equations (\ref{100}).
\end{proof}

Next we recall the CR version of Bochner formula for a real smooth function on
a closed pseudohermitian manifold $(M^{2n+1},J,\theta)$.

\begin{lemma}
\label{CR1}(\cite{g}) Let $(M^{2n+1},J,\theta)$ be a closed pseudohermitian
manifold. For a real smooth function $u$ on $(M,J,\theta)$,
\begin{equation}%
\begin{split}
\frac{1}{2}\Delta_{b}|\nabla_{b}u|^{2}  &  =|(\nabla^{H})^{2}u|^{2}%
+\langle \nabla_{b}u,\nabla_{b}\Delta_{b}u\rangle_{L_{\theta}}\\
&  \quad+[2Ric-(n-2)Tor]\big((\nabla_{b}u)_{\mathbf{C}},(\nabla_{b}%
u)_{\mathbf{C}}\big)+2\langle J\nabla_{b}u,\nabla_{b}u_{0}\rangle_{L_{\theta}%
}.
\end{split}
\label{CR}%
\end{equation}
Here $(\nabla_{b}u)_{\mathbf{C}}=u_{\overline{\alpha}}Z_{\alpha}$ is the
corresponding complex $(1,0)$-vector field of $\  \nabla_{b}u$ and
$d_{b}u=u_{\alpha}\theta^{\alpha}+u_{\overline{\alpha}}\theta^{\overline
{\alpha}}.$
\end{lemma}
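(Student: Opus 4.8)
The plan is to establish \eqref{CR} by a direct local computation with the Tanaka--Webster connection, following Greenleaf \cite{g}. First I would fix a point of $M$ and work in a local unitary frame $\{Z_\alpha\}$ for $T_{1,0}M$ with $h_{\alpha\bar\beta}=\delta_{\alpha\beta}$, so that every tensor identity reduces to a component computation; recall from Section 2 that $|\nabla_b u|^2=2u_\alpha u_{\bar\alpha}$, $|(\nabla^H)^2u|^2=|\nabla_b^2u|^2=2(u_{\alpha\beta}u_{\bar\alpha\bar\beta}+u_{\alpha\bar\beta}u_{\bar\alpha\beta})$, and $\Delta_b f=f_{\alpha\bar\alpha}+f_{\bar\alpha\alpha}$. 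Then I would apply $\tfrac12\Delta_b$ to $|\nabla_b u|^2=2u_\alpha u_{\bar\alpha}$ and expand by the Leibniz rule; the resulting terms split into products of two second covariant derivatives of $u$ and products of a first covariant derivative with a third covariant derivative.

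The first group assembles, directly from the definition of $|\nabla_b^2u|^2$ recalled above, into the Hessian term $|(\nabla^H)^2u|^2$ of \eqref{CR}. For the second group I would feed the third-order terms $u_{\alpha\beta\bar\beta}$ and $u_{\bar\alpha\beta\bar\beta}$ through the Webster commutation formulas for covariant derivatives of a scalar: the first-order commutation $u_{\alpha\bar\beta}-u_{\bar\beta\alpha}=ih_{\alpha\bar\beta}u_0$, the symmetry $u_{\alpha\beta}=u_{\beta\alpha}$, the $\mathbf{T}$-direction commutation $u_{\alpha0}-u_{0\alpha}=A_{\alpha\beta}u^{\beta}$ involving the pseudohermitian torsion, and the second-order commutation relation that, when a $(1,0)$ index is moved past an already-differentiated $(0,1)$ slot, produces the Tanaka--Webster curvature $R_{\alpha\bar\beta\rho\bar\sigma}$ and the covariant derivatives $A_{\alpha\beta,\gamma}$ (equivalently $W_{\beta\bar\alpha\gamma}=A_{\beta\gamma,\bar\alpha}$ as in Section 2). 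After all the commuting, the ``diagonal'' part of the third derivatives reorganizes into $\langle\nabla_b u,\nabla_b\Delta_b u\rangle_{L_\theta}$, the curvature terms contract to a Ricci term evaluated on $(\nabla_b u)_{\mathbf C}=u_{\bar\alpha}Z_\alpha$, and the torsion and torsion-derivative terms, together with the $ih_{\alpha\bar\beta}u_0$ factors, produce the remaining torsion contribution and the mixed term $2\langle J\nabla_b u,\nabla_b u_0\rangle_{L_\theta}$.

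The hard part will be the bookkeeping in the second group: one must commute the third covariant derivatives in a consistent order, retain every $ih_{\alpha\bar\beta}$ factor, and carefully combine the several torsion-type terms --- those from the $\mathbf{T}$-direction commutations $u_{\alpha0}-u_{0\alpha}$, those from the identity $W_{\beta\bar\alpha\gamma}=A_{\beta\gamma,\bar\alpha}$, and the pure curvature terms --- so that the final coefficients come out precisely as $2$ in front of $Ric$ and $-(n-2)$ in front of $Tor$, and so that the leftover $\mathbf{T}$-derivative terms collapse exactly into $2\langle J\nabla_b u,\nabla_b u_0\rangle_{L_\theta}$ with $J\nabla_b u=i(u_{\bar\alpha}Z_\alpha-u_\alpha Z_{\bar\alpha})$. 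Since \eqref{CR} is precisely the CR Bochner formula of Greenleaf \cite{g}, an acceptable alternative is to cite that reference directly and merely check that the normalizations of $Ric$, $Tor$, $R_{\beta\bar\alpha\rho\bar\sigma}$ and $A_{\alpha\beta}$ agree with the conventions fixed in Section 2.
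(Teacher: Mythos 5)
The paper gives no proof of this lemma at all --- it is quoted verbatim from Greenleaf \cite{g} with only the citation --- so your fallback option of citing \cite{g} and checking that the conventions for $Ric$, $Tor$, $R_{\beta\bar\alpha\rho\bar\sigma}$ and $A_{\alpha\beta}$ match Section 2 is exactly what the authors do. Your computational outline (unitary frame, Leibniz rule on $2u_\alpha u_{\bar\alpha}$, then the commutation identities $u_{\alpha\bar\beta}-u_{\bar\beta\alpha}=ih_{\alpha\bar\beta}u_0$, $u_{\alpha\beta}=u_{\beta\alpha}$, $u_{\alpha 0}-u_{0\alpha}=A_{\alpha\beta}u^\beta$ to reorganize the third-order terms) is the correct standard derivation of the cited formula, so the proposal is fine.
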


Since $f_{\delta}$ and $e_{0}(\varphi)$ are independent of the choice of local
coordinates, for each point $(x,t)$ one may choose a normal coordinate chart
$U$ at $(x,t)$ and a normal coordinate chart at $\varphi(x,t)$ such that
$\varphi(U)\subset V$ and then fulfill the following computations at the point
$(x,t)$.

Now we are ready to derive the Moser type Harnack inequality for the total
regularized energy density.

\begin{lemma}
\label{l32} Let $(M^{2n+1},J,\theta)$ be a closed Sasakian manifold and
$(N,g_{ij})$ be a compact Riemannian manifold. The solution $\varphi$ of the
regularized equation (\ref{4a}) satisfying the following inequalities:

$(i)$ For $g=f_{\delta}+\varepsilon e_{0}=|\nabla_{b}\varphi|^{2}%
+\delta+\varepsilon e_{0},$
\begin{align}
\label{32A} &  \frac{\partial g}{\partial t}-\operatorname{div}_{b}(f_{\delta
}^{\frac{p-2}{2}}\nabla_{b}g)-(p-2)\operatorname{div}_{b}(f_{\delta}%
^{\frac{p-4}{2}}\langle \nabla_{b}f_{\delta},\nabla_{b}\varphi^{k}\rangle
\nabla_{b}\varphi^{k})\nonumber \\
&  -2\varepsilon \operatorname{div}_{b}((f_{\delta}^{\frac{p-2}{2}})_{0}%
\nabla_{b}\varphi^{k})\varphi_{0}^{k}+\frac{p-2}{2}f_{\delta}^{\frac{p-4}{2}%
}|\nabla_{b}f_{\delta}|^{2}+2f_{\delta}^{\frac{p-2}{2}}|\nabla_{b}^{2}%
\varphi^{k}|^{2}+2\varepsilon f_{\delta}^{\frac{p-2}{2}}|\nabla_{b}\varphi
_{0}^{k}|^{2}\\
&  \leq C(f_{\delta}^{\frac{p}{2}}+f_{\delta}^{\frac{p+2}{2}})+C\varepsilon
f_{\delta}^{\frac{p}{2}}e_{0}-4f_{\delta}^{\frac{p-2}{2}}\langle J\nabla
_{b}\varphi^{k},\nabla_{b}\varphi_{0}^{k}\rangle.\nonumber
\end{align}

$(ii)$ If \ the sectional curvature of $(N,g_{ij})$ is nonpositive
\[
K^{N}\leq0,
\]
then
\begin{align}
\label{32B} &  \frac{\partial g}{\partial t}-\operatorname{div}_{b}(f_{\delta
}^{\frac{p-2}{2}}\nabla_{b}g)-(p-2)\operatorname{div}_{b}(f_{\delta}%
^{\frac{p-4}{2}}\langle \nabla_{b}f_{\delta},\nabla_{b}\varphi^{k}\rangle
\nabla_{b}\varphi^{k})\nonumber \\
&  -2\varepsilon \operatorname{div}_{b}((f_{\delta}^{\frac{p-2}{2}})_{0}%
\nabla_{b}\varphi^{k})\varphi_{0}^{k}+\frac{p-2}{2}f_{\delta}^{\frac{p-4}{2}%
}|\nabla_{b}f_{\delta}|^{2}+2f_{\delta}^{\frac{p-2}{2}}|\nabla_{b}^{2}%
\varphi^{k}|^{2}+2\varepsilon f_{\delta}^{\frac{p-2}{2}}|\nabla_{b}\varphi
_{0}^{k}|^{2}\\
&  \leq Cf_{\delta}^{\frac{p}{2}}-4f_{\delta}^{\frac{p-2}{2}}\langle
J\nabla_{b}\varphi^{k},\nabla_{b}\varphi_{0}^{k}\rangle.\nonumber
\end{align}

\end{lemma}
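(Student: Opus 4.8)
The plan is to establish the two differential inequalities \eqref{32A} and \eqref{32B} by a Bochner-type computation applied to the total regularized energy density $g = f_\delta + \varepsilon e_0$, where $f_\delta = |\nabla_b\varphi|^2 + \delta$. The starting point is to differentiate $g$ along the regularized flow \eqref{4a} and compare with the spatial operator on the left-hand side. First I would compute $\partial_t f_\delta$: since $\partial_t|\nabla_b\varphi|^2 = 2 g_{ij}(\partial_t\varphi^i_\alpha)\varphi^j_{\bar\alpha} + \text{(lower order)}$, one commutes $\partial_t$ past $Z_\alpha$ and substitutes the evolution equation $\partial_t\varphi^k = \operatorname{div}_b(f_\delta^{(p-2)/2}\nabla_b\varphi^k) + 2 f_\delta^{(p-2)/2} h^{\alpha\bar\beta}\widetilde\Gamma^k_{ij}\varphi^i_\alpha\varphi^j_{\bar\beta}$. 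Working in normal coordinates at the chosen point (as the remark before the lemma permits, since both $f_\delta$ and $e_0$ are coordinate-independent), the Christoffel terms are controlled by the curvature of $N$ and by $f_\delta$, and the leading elliptic part reproduces $\operatorname{div}_b(f_\delta^{(p-2)/2}\nabla_b f_\delta) + (p-2)\operatorname{div}_b(f_\delta^{(p-4)/2}\langle\nabla_b f_\delta,\nabla_b\varphi^k\rangle\nabla_b\varphi^k)$ together with the two good (negative) terms $\tfrac{p-2}{2}f_\delta^{(p-4)/2}|\nabla_b f_\delta|^2$ and $2 f_\delta^{(p-2)/2}|\nabla_b^2\varphi^k|^2$ after integrating by parts and using the CR Bochner formula \eqref{CR} of Lemma \ref{CR1} (applied componentwise to $\varphi^k$). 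The curvature term $[2\mathrm{Ric} - (n-2)\mathrm{Tor}]$ is absorbed into $C f_\delta^{p/2}$ (the torsion vanishes on a Sasakian manifold, and on a closed $M$ the Ricci tensor is bounded); the only term from the CR Bochner formula with no Riemannian analogue is the mixed term $2\langle J\nabla_b\varphi^k,\nabla_b\varphi^k_0\rangle_{L_\theta}$, which after multiplying by $2 f_\delta^{(p-2)/2}$ produces exactly the $-4 f_\delta^{(p-2)/2}\langle J\nabla_b\varphi^k,\nabla_b\varphi^k_0\rangle$ on the right.

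Next I would treat the $\varepsilon e_0 = \varepsilon g_{ij}\varphi^i_0\varphi^j_0$ contribution. Here one commutes $\mathbf{T}$ with $Z_\alpha$ (on a Sasakian manifold the torsion terms drop out, which is precisely why the Sasakian hypothesis is essential), applies $\mathbf{T}$ to the evolution equation to get an equation for $\varphi^k_0 = \mathbf{T}\varphi^k$, and then multiplies by $\varphi^k_0$ and uses Cauchy–Schwarz. This yields the term $-2\varepsilon\operatorname{div}_b((f_\delta^{(p-2)/2})_0\nabla_b\varphi^k)\varphi^k_0$ on the left, the good term $2\varepsilon f_\delta^{(p-2)/2}|\nabla_b\varphi^k_0|^2$ on the left, and a remainder of the form $C\varepsilon f_\delta^{p/2}e_0$ plus cross terms involving $f_\delta^{(p-2)/2}\langle J\nabla_b\varphi^k,\nabla_b\varphi^k_0\rangle$ (from differentiating the mixed term, or from the curvature of $N$ coupling $\varphi_0$ with $\nabla_b\varphi$) and possibly a $C f_\delta^{(p+2)/2}$ contribution from the quadratic-in-$\nabla_b\varphi$ Christoffel nonlinearity combined with $e_0$-estimates. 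The bookkeeping is to choose the constant in front of the mixed terms so that, after combining the $f_\delta$-part and the $\varepsilon e_0$-part, everything not already displayed on the left is bounded by the right-hand side of \eqref{32A}. For part $(ii)$, the improvement comes from the standard observation that when $K^N \le 0$ the second fundamental form / curvature contributions that produced the unfavorable term $C f_\delta^{(p+2)/2}$ (coming from $\langle R^N(\ldots)\ldots\rangle$ acting on $\nabla_b\varphi$, which is quadratic in the energy density and hence scales like $f_\delta \cdot f_\delta^{p/2}$) have a favorable sign and may be dropped, so only $C f_\delta^{p/2}$ survives; similarly the $C\varepsilon f_\delta^{p/2}e_0$ term is absorbed or eliminated, leaving \eqref{32B}.

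The main obstacle I anticipate is the careful handling of the mixed term $\langle J\nabla_b\varphi^k,\nabla_b\varphi^k_0\rangle_{L_\theta}$ and its interaction with the $e_0$-equation: this is exactly the difficulty flagged in the introduction, and the whole point of introducing $e_0(\varphi) = g_{ij}\varphi^i_0\varphi^j_0$ with a small parameter $\varepsilon$ is to generate the compensating good term $2\varepsilon f_\delta^{(p-2)/2}|\nabla_b\varphi^k_0|^2$, which together with $2 f_\delta^{(p-2)/2}|\nabla_b^2\varphi^k|^2$ will (in the subsequent Moser iteration of Theorem \ref{T31}) dominate the mixed term via Cauchy–Schwarz with an appropriately small weight. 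Keeping all constants $C$ uniform in $\delta \in (0,1)$ — so that $f_\delta^{(p-2)/2} \le (|\nabla_b\varphi|^2 + 1)^{(p-2)/2}$ type bounds, and the negative powers of $f_\delta$ that appear (when $p < 2$) are still controlled since $f_\delta \ge \delta > 0$ makes the regularized operator uniformly elliptic — requires care, but since we only need the inequalities at a fixed point with the stated structure, the routine estimates go through; the delicate part is genuinely the algebra of collecting the torsion-free commutator terms and the mixed term so that the final right-hand sides are exactly as stated.
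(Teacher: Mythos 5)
Your proposal follows essentially the same route as the paper: evolve $f_{\delta}$ along the flow and apply the CR Bochner formula componentwise to produce the mixed term $-4f_{\delta}^{\frac{p-2}{2}}\langle J\nabla_{b}\varphi^{k},\nabla_{b}\varphi_{0}^{k}\rangle$ together with target-curvature terms bounded by $Cf_{\delta}^{\frac{p}{2}}+Cf_{\delta}^{\frac{p+2}{2}}$ (nonpositive when $K^{N}\leq0$), then evolve $\varepsilon e_{0}$ using $[\Delta_{b},\mathbf{T}]=0$ on a Sasakian manifold to generate the good term $2\varepsilon f_{\delta}^{\frac{p-2}{2}}|\nabla_{b}\varphi_{0}|^{2}$ and the remainder $C\varepsilon f_{\delta}^{\frac{p}{2}}e_{0}$, and finally add the two inequalities. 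One small correction to your bookkeeping: the $e_{0}$-equation produces no additional $\langle J\nabla_{b}\varphi^{k},\nabla_{b}\varphi_{0}^{k}\rangle$ cross terms (the torsion-free commutator eliminates them), so the coefficient $-4$ on the right-hand side comes solely from the $f_{\delta}$-part.
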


By Young's inequality, the bad term $\langle J\nabla_{b}\varphi^{k},\nabla
_{b}\varphi_{0}^{k}\rangle$ in the RHS of (\ref{32A}) will be dominated by the
good term $|\nabla_{b}\varphi_{0}|^{2}$ in LHS. As a consequence, we have

\begin{corollary}
\label{c1} Let $(M^{2n+1},J,\theta)$ be a closed Sasakian manifold and
$(N,g_{ij})$ be a compact Riemannian manifold. The solution $\varphi$ of the
regularized equation (\ref{4a}) satisfying the following inequalities:

$(i)$ For $g=f_{\delta}+\varepsilon e_{0}=|\nabla_{b}\varphi|^{2}%
+\delta+\varepsilon e_{0},$
\begin{align*}
&  \frac{\partial g}{\partial t}-\operatorname{div}_{b}(f_{\delta}^{\frac
{p-2}{2}}\nabla_{b}g)-(p-2)\operatorname{div}_{b}(f_{\delta}^{\frac{p-4}{2}%
}\langle \nabla_{b}f_{\delta},\nabla_{b}\varphi^{k}\rangle \nabla_{b}\varphi
^{k}\\
&  -2\varepsilon \operatorname{div}_{b}((f_{\delta}^{\frac{p-2}{2}})_{0}%
\nabla_{b}\varphi^{k})\varphi_{0}^{k}+\frac{p-2}{2}f_{\delta}^{\frac{p-4}{2}%
}|\nabla_{b}f_{\delta}|^{2}+2f_{\delta}^{\frac{p-2}{2}}|\nabla_{b}^{2}%
\varphi^{k}|^{2}+\varepsilon f_{\delta}^{\frac{p-2}{2}}|\nabla_{b}\varphi
_{0}^{k}|^{2}\\
&  \leq C(f_{\delta}^{\frac{p}{2}}+f_{\delta}^{\frac{p+2}{2}})+C\varepsilon
f_{\delta}^{\frac{p}{2}}e_{0}.
\end{align*}

$(ii)$ If \ the sectional curvature of $(N,g_{ij})$ is nonpositive
\[
K^{N}\leq0,
\]
then
\begin{align*}
&  \frac{\partial g}{\partial t}-\operatorname{div}_{b}(f_{\delta}^{\frac
{p-2}{2}}\nabla_{b}g)-(p-2)\operatorname{div}_{b}(f_{\delta}^{\frac{p-4}{2}%
}\langle \nabla_{b}f_{\delta},\nabla_{b}\varphi^{k}\rangle \nabla_{b}\varphi
^{k})\\
&  -2\varepsilon \operatorname{div}_{b}((f_{\delta}^{\frac{p-2}{2}})_{0}%
\nabla_{b}\varphi^{k})\varphi_{0}^{k}+\frac{p-2}{2}f_{\delta}^{\frac{p-4}{2}%
}|\nabla_{b}f_{\delta}|^{2}+2f_{\delta}^{\frac{p-2}{2}}|\nabla_{b}^{2}%
\varphi|^{2}+\varepsilon f_{\delta}^{\frac{p-2}{2}}|\nabla_{b}\varphi_{0}%
|^{2}\\
&  \leq Cf_{\delta}^{\frac{p}{2}}.
\end{align*}

\end{corollary}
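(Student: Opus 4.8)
The statement is a direct consequence of Lemma \ref{l32}: the right-hand sides of (\ref{32A}) and (\ref{32B}) differ from those claimed in the Corollary only by the mixed term $-4f_{\delta}^{\frac{p-2}{2}}\langle J\nabla_{b}\varphi^{k},\nabla_{b}\varphi_{0}^{k}\rangle$, which is the single summand of indefinite sign, while the coefficient of $f_{\delta}^{\frac{p-2}{2}}|\nabla_{b}\varphi_{0}^{k}|^{2}$ on the left-hand side is reduced from $2\varepsilon$ to $\varepsilon$. So the plan is simply to bound that mixed term pointwise, using that $J$ is an isometry of the Levi form on $\xi$ together with the Cauchy--Schwarz inequality, and then to split the resulting product by Young's inequality so that one half is absorbed by $2\varepsilon f_{\delta}^{\frac{p-2}{2}}|\nabla_{b}\varphi_{0}^{k}|^{2}$ on the left and the remaining $f_{\delta}^{\frac p2}$-contribution is merged into the term on the right.

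Concretely, since $|J\nabla_{b}\varphi^{k}|=|\nabla_{b}\varphi^{k}|$ for each $k$, I would apply Cauchy--Schwarz pointwise and then in the summation index $k$ to get
\[
\bigl|\langle J\nabla_{b}\varphi^{k},\nabla_{b}\varphi_{0}^{k}\rangle\bigr|\le\Bigl(\sum_{k}|\nabla_{b}\varphi^{k}|^{2}\Bigr)^{1/2}\Bigl(\sum_{k}|\nabla_{b}\varphi_{0}^{k}|^{2}\Bigr)^{1/2}=|\nabla_{b}\varphi|\,|\nabla_{b}\varphi_{0}|\le f_{\delta}^{1/2}\,|\nabla_{b}\varphi_{0}|,
\]
so that $4f_{\delta}^{\frac{p-2}{2}}\bigl|\langle J\nabla_{b}\varphi^{k},\nabla_{b}\varphi_{0}^{k}\rangle\bigr|\le 4\bigl(f_{\delta}^{\frac{p-2}{4}}|\nabla_{b}\varphi_{0}|\bigr)\cdot f_{\delta}^{\frac p4}$. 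Then Young's inequality $4ab\le\varepsilon a^{2}+\frac{4}{\varepsilon}b^{2}$ with $a=f_{\delta}^{\frac{p-2}{4}}|\nabla_{b}\varphi_{0}|$ and $b=f_{\delta}^{\frac p4}$ gives
\[
4f_{\delta}^{\frac{p-2}{2}}\bigl|\langle J\nabla_{b}\varphi^{k},\nabla_{b}\varphi_{0}^{k}\rangle\bigr|\le\varepsilon f_{\delta}^{\frac{p-2}{2}}|\nabla_{b}\varphi_{0}|^{2}+\frac{4}{\varepsilon}f_{\delta}^{\frac p2}.
\]

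Feeding this into (\ref{32A}) (respectively (\ref{32B})), the summand $\varepsilon f_{\delta}^{\frac{p-2}{2}}|\nabla_{b}\varphi_{0}|^{2}$ is moved to the left and subtracted from $2\varepsilon f_{\delta}^{\frac{p-2}{2}}|\nabla_{b}\varphi_{0}^{k}|^{2}$, leaving exactly $\varepsilon f_{\delta}^{\frac{p-2}{2}}|\nabla_{b}\varphi_{0}^{k}|^{2}$ there as in the Corollary, while $\frac{4}{\varepsilon}f_{\delta}^{\frac p2}$ is absorbed into the term $Cf_{\delta}^{\frac p2}$ on the right after enlarging $C$ (which now also depends on $\varepsilon$, hence ultimately on $K,M,N$ once $\varepsilon$ is fixed in Theorem \ref{T31}); part $(ii)$ is identical with (\ref{32B}) in place of (\ref{32A}). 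I do not expect a genuine obstacle in this step — it is pure bookkeeping of Young's inequality and requires no smallness of $\varepsilon$ — and the only points to keep straight are that the implicit sum over $k$ makes $\sum_{k}|\nabla_{b}\varphi^{k}|^{2}=|\nabla_{b}\varphi|^{2}\le f_{\delta}$ and that the factor $\varepsilon^{-1}$ produced by Young's inequality is harmless because it is swallowed by the constant $C$.
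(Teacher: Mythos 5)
Your proposal is correct and is exactly the argument the paper intends: the Corollary is obtained from Lemma \ref{l32} by Cauchy--Schwarz on the mixed term (using that $J$ preserves the Levi form) followed by Young's inequality with weight $\varepsilon$, absorbing $\varepsilon f_{\delta}^{\frac{p-2}{2}}|\nabla_{b}\varphi_{0}|^{2}$ into the left-hand side and $\tfrac{4}{\varepsilon}f_{\delta}^{\frac{p}{2}}$ into $Cf_{\delta}^{\frac{p}{2}}$ on the right. Your bookkeeping of the exponents and of the $\varepsilon$-dependence of $C$ matches what the paper does later in its integral estimates (e.g.\ the step producing the $\tfrac{16}{\varepsilon}$ term), so nothing is missing.
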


\begin{proof}
$(i)$ We first compute $\frac{\partial}{\partial t}f_{\delta}%
-\operatorname{div}_{b}(f_{\delta}^{\frac{p-2}{2}}\nabla_{b}f_{\delta})$ :

Note that $|\nabla_{b}\varphi|^{2}=2g_{ij}\varphi_{\alpha}^{i}\varphi
_{\bar{\alpha}}^{j}$. It is straightforward to compute as
\[%
\begin{array}
[c]{ccl}%
\frac{\partial f_{\delta}}{\partial t} & = & \frac{\partial}{\partial
t}(2g_{ij}\varphi_{\alpha}^{i}\varphi_{\bar{\alpha}}^{j}+\delta)\\
& = & 2g_{ij}(\frac{\partial \varphi^{i}}{\partial t})_{\alpha}\varphi
_{\bar{\alpha}}^{j}+2g_{ij}(\frac{\partial \varphi^{j}}{\partial t}%
)_{\bar{\alpha}}\varphi_{\alpha}^{i}\\
& = & 2g_{kl}[div(f_{\delta}^{\frac{p-2}{2}}\nabla_{b}\varphi^{k})+2f_{\delta
}^{\frac{p-2}{2}}\tilde{\Gamma}_{ij}^{k}\varphi_{\alpha}^{i}\varphi
_{\bar{\alpha}}^{j}]_{\beta}\varphi_{\bar{\beta}}^{l}\\
&  & +2g_{kl}[div(f_{\delta}^{\frac{p-2}{2}}\nabla_{b}\varphi^{k})+2f_{\delta
}^{\frac{p-2}{2}}\tilde{\Gamma}_{ij}^{k}\varphi_{\alpha}^{i}\varphi
_{\bar{\alpha}}^{j}]_{\bar{\beta}}\varphi_{\beta}^{l}\\
& = & 2\langle \nabla_{b}\varphi^{k},\nabla_{b}div(f_{\delta}^{\frac{p-2}{2}%
}\nabla_{b}\varphi^{k})\rangle \\
&  & +2f_{\delta}^{\frac{p-2}{2}}[2\widetilde{\Gamma}_{ij,l}^{k}\varphi
_{\beta}^{l}\varphi_{\alpha}^{i}\varphi_{\bar{\alpha}}^{j}\varphi_{\bar{\beta
}}^{k}+2\widetilde{\Gamma}_{ij,l}^{k}\varphi_{\bar{\beta}}^{l}\varphi_{\alpha
}^{i}\varphi_{\bar{\alpha}}^{j}\varphi_{\beta}^{k}]
\end{array}
\]
and from the CR Bochner formula (\ref{CR})
\begin{align*}
&  \operatorname{div}_{b}(f_{\delta}^{\frac{p-2}{2}}\nabla_{b}f_{\delta})\\
&  =\frac{p-2}{2}f_{\delta}^{^{\frac{p-4}{2}}}|\nabla_{b}f_{\delta}%
|^{2}+f_{\delta}^{\frac{p-2}{2}}\triangle_{b}f_{\delta}\\
&  =\frac{p-2}{2}f_{\delta}^{^{\frac{p-4}{2}}}|\nabla_{b}f_{\delta}%
|^{2}+2f_{\delta}^{\frac{p-2}{2}}\triangle_{b}[g_{ij}\varphi_{\alpha}%
^{i}\varphi_{\bar{\alpha}}^{j}]\\
&  =\frac{p-2}{2}f_{\delta}^{^{\frac{p-4}{2}}}|\nabla_{b}f_{\delta}%
|^{2}+2f_{\delta}^{\frac{p-2}{2}}[\frac{1}{2}\triangle_{b}|\nabla_{b}%
\varphi^{k}|^{2}+\varphi_{\alpha}^{i}\varphi_{\bar{\alpha}}\triangle_{b}%
g_{ij}]\\
&  =\frac{p-2}{2}f_{\delta}^{^{\frac{p-4}{2}}}|\nabla_{b}f_{\delta}%
|^{2}+2f_{\delta}^{\frac{p-2}{2}}[|\nabla_{b}^{2}\varphi^{k}|^{2}%
+\langle \nabla_{b}\varphi^{k},\nabla_{b}\triangle_{b}\varphi^{k}%
\rangle+\varphi_{\alpha}^{i}\varphi_{\bar{\alpha}}\triangle_{b}g_{ij}\\
&  \  \  \ +(2Ric-(n-2)Tor)((\nabla_{b}\varphi^{k})_{C},(\nabla_{b}\varphi
^{k})_{C})+2\langle J\nabla_{b}\varphi^{k},\nabla_{b}\varphi_{0}^{k}\rangle]
\end{align*}
Thus, based on \cite[(3.2) and (3.3)]{cc1}
\begin{align}
&  \frac{\partial f_{\delta}}{\partial t}-\operatorname{div}_{b}(f_{\delta
}^{\frac{p-2}{2}}\nabla_{b}f_{\delta})-(p-2)\operatorname{div}_{b}(f_{\delta
}^{\frac{p-4}{2}}\langle \nabla_{b}f_{\delta},\nabla_{b}\varphi^{k}%
\rangle \nabla_{b}\varphi^{k})\nonumber \\
&  +\frac{p-2}{2}f_{\delta}^{\frac{p-4}{2}}|\nabla_{b}f_{\delta}%
|^{2}+2f_{\delta}^{\frac{p-2}{2}}|\nabla_{b}^{2}\varphi|^{2}\label{32c}\\
&  =2f_{\delta}^{\frac{p-2}{2}}[2\widetilde{\Gamma}_{ij,l}^{k}\varphi_{\beta
}^{l}\varphi_{\alpha}^{i}\varphi_{\bar{\alpha}}^{j}\varphi_{\bar{\beta}}%
^{k}+2\widetilde{\Gamma}_{ij,l}^{k}\varphi_{\bar{\beta}}^{l}\varphi_{\alpha
}^{i}\varphi_{\bar{\alpha}}^{j}\varphi_{\beta}^{k}-\varphi_{\alpha}^{i}%
\varphi_{\bar{\alpha}}\triangle_{b}g_{ij}\nonumber \\
&  -(2Ric-(n-2)Tor)((\nabla_{b}\varphi^{k})_{C},(\nabla_{b}\varphi^{k}%
)_{C})]-4f_{\delta}^{\frac{p-2}{2}}\langle J\nabla_{b}\varphi^{k},\nabla
_{b}\varphi_{0}^{k}\rangle \nonumber \\
&  =2f_{\delta}^{\frac{p-2}{2}}[2\tilde{R}_{ijkl}\varphi_{\alpha}^{i}%
\varphi_{\beta}^{j}\varphi_{\bar{\alpha}}^{k}\varphi_{\bar{\beta}}^{l}%
+2\tilde{R}_{ijkl}\varphi_{\alpha}^{i}\varphi_{\bar{\beta}}^{j}\varphi
_{\bar{\alpha}}^{k}\varphi_{\beta}^{l}\nonumber
\end{align}%
\begin{align}
&  -(2Ric-(n-2)Tor)((\nabla_{b}\varphi^{k})_{C},(\nabla_{b}\varphi^{k}%
)_{C})]]-4f_{\delta}^{\frac{p-2}{2}}\langle J\nabla_{b}\varphi^{k},\nabla
_{b}\varphi_{0}^{k}\rangle \nonumber \\
&  \leq Cf_{\delta}^{\frac{p}{2}}+Cf_{\delta}^{\frac{p+2}{2}}-4f_{\delta
}^{\frac{p-2}{2}}\langle J\nabla_{b}\varphi^{k},\nabla_{b}\varphi_{0}%
^{k}\rangle.\nonumber
\end{align}
This implies
\begin{align}
&  \frac{\partial f_{\delta}}{\partial t}-\operatorname{div}_{b}(f_{\delta
}^{\frac{p-2}{2}}\nabla_{b}f_{\delta})-(p-2)\operatorname{div}_{b}(f_{\delta
}^{\frac{p-4}{2}}\langle \nabla_{b}f_{\delta},\nabla_{b}\varphi^{k}%
\rangle \nabla_{b}\varphi^{k})\nonumber \\
&  +\frac{p-2}{2}f_{\delta}^{\frac{p-4}{2}}|\nabla_{b}f_{\delta}%
|^{2}+2f_{\delta}^{\frac{p-2}{2}}|\nabla_{b}^{2}\varphi^{k}|^{2}\label{32a}\\
&  \leq Cf_{\delta}^{\frac{p}{2}}+Cf_{\delta}^{\frac{p+2}{2}}-4f_{\delta
}^{\frac{p-2}{2}}\langle J\nabla_{b}\varphi^{k},\nabla_{b}\varphi_{0}%
^{k}\rangle.\nonumber
\end{align}
Next we compute $\frac{\partial}{\partial t}e_{0}(\varphi)-\operatorname{div}%
_{b}(f_{\delta}^{\frac{p-2}{2}}\nabla_{b}e_{0}(\varphi))$ :

We observe from (\cite{cc2}) that for any smooth function $u$,
\[
\lbrack \Delta_{b},T]u=4\Big[i\sum_{\alpha,\beta=1}^{n}(A_{\bar{\alpha}%
\bar{\beta}}u_{\beta})_{\alpha}\Big].
\]
Thus for any Sasakian manifold $(M^{2n+1},J,\theta)$ (i.e. vanishing
pseudohermitian torsion),
\begin{equation}
\lbrack \Delta_{b},T]u=0. \label{32aa}%
\end{equation}

We first compute
\[
\frac{\partial e_{0}(\varphi)}{\partial t}=2g_{ij}(\frac{\partial \varphi^{i}%
}{\partial t})_{0}\varphi_{0}^{j}=2g_{kl}[\operatorname{div}_{b}(f_{\delta
}^{\frac{p-2}{2}}\nabla_{b}\varphi^{k})+2f_{\delta}^{\frac{p-2}{2}}%
\tilde{\Gamma}_{ij}^{k}\varphi_{\alpha}^{i}\varphi_{\bar{\alpha}}^{j}%
]_{0}\varphi_{0}^{l}%
\]
and
\[
\operatorname{div}_{b}(f_{\delta}^{\frac{p-2}{2}}\nabla_{b}e_{0}%
(\varphi))=\langle \nabla_{b}f_{\delta}^{\frac{p-2}{2}},\nabla_{b}e_{0}%
\rangle+f_{\delta}^{\frac{p-2}{2}}[2\varphi_{0}^{i}\triangle_{b}\varphi
_{0}^{i}+2|\nabla_{b}\varphi_{0}^{i}|^{2}+\varphi_{0}^{i}\varphi_{0}%
^{j}\triangle_{b}g_{ij}].
\]
Based on \cite[(3.4)]{cc1} and (\ref{32aa}), one can derive
\begin{align*}
&  \frac{\partial}{\partial t}e_{0}(\varphi)-\operatorname{div}_{b}(f_{\delta
}^{\frac{p-2}{2}}\nabla_{b}e_{0}(\varphi))-2\operatorname{div}_{b}((f_{\delta
}^{\frac{p-2}{2}})_{0}\nabla_{b}\varphi^{k})\varphi_{0}^{k}\\
&  =f_{\delta}^{\frac{p-2}{2}}[4\widetilde{\Gamma}_{ij,l}^{k}\varphi_{\alpha
}^{i}\varphi_{\bar{\alpha}}^{j}\varphi_{0}^{k}\varphi_{0}^{l}-\varphi_{0}%
^{i}\varphi_{0}^{j}\triangle_{b}g_{ij}]-2f_{\delta}^{\frac{p-2}{2}}|\nabla
_{b}\varphi_{0}|^{2}.
\end{align*}
On the other hand ( \cite[(3.5)]{cc1}),
\[
4\sum_{i,j,k,\ell=1}^{m}\sum_{\alpha=1}^{n}\widetilde{\Gamma}_{ij,l}%
^{k}\varphi_{\alpha}^{i}\varphi_{\overline{\alpha}}^{j}\varphi_{0}^{k}%
\varphi_{0}^{\ell}-\sum_{i,j=1}^{m}\varphi_{0}^{i}\varphi_{0}^{j}\Delta
_{b}(g_{ij})=4\sum_{i,j,k,\ell=1}^{m}\sum_{\alpha=1}^{n}\widetilde{R}%
_{ijk\ell}\varphi_{\alpha}^{i}\varphi_{0}^{j}\varphi_{\overline{\alpha}}%
^{k}\varphi_{0}^{\ell}.
\]
Hence
\begin{align}
\label{32b} &  \frac{\partial}{\partial t}e_{0}(\varphi)-\operatorname{div}%
_{b}(f_{\delta}^{\frac{p-2}{2}}\nabla_{b}e_{0}(\varphi))-2\operatorname{div}%
_{b}((f_{\delta}^{\frac{p-2}{2}})_{0}\nabla_{b}\varphi^{k})\varphi_{0}%
^{k}\nonumber \\
&  =f_{\delta}^{\frac{p-2}{2}}[4\widetilde{R}_{ijk\ell}\varphi_{\alpha}%
^{i}\varphi_{0}^{j}\varphi_{\overline{\alpha}}^{k}\varphi_{0}^{\ell
}]-2f_{\delta}^{\frac{p-2}{2}}|\nabla_{b}\varphi_{0}|^{2}\\
&  \leq Cf_{\delta}^{^{\frac{p}{2}}}e_{0}-2f_{\delta}^{\frac{p-2}{2}}%
|\nabla_{b}\varphi_{0}|^{2}.\nonumber
\end{align}
From (\ref{32a}) and (\ref{32b}), we have
\begin{align*}
&  \frac{\partial g}{\partial t}-\operatorname{div}_{b}(f_{\delta}^{\frac
{p-2}{2}}\nabla_{b}g)-(p-2)\operatorname{div}_{b}(f_{\delta}^{\frac{p-4}{2}%
}\langle \nabla_{b}f_{\delta},\nabla_{b}\varphi^{k}\rangle \nabla_{b}\varphi
^{k})\\
&  -2\varepsilon \operatorname{div}_{b}((f_{\delta}^{\frac{p-2}{2}})_{0}%
\nabla_{b}\varphi^{k})\varphi_{0}^{k}+\frac{p-2}{2}f_{\delta}^{\frac{p-4}{2}%
}|\nabla_{b}f_{\delta}|^{2}+2f_{\delta}^{\frac{p-2}{2}}|\nabla_{b}^{2}%
\varphi|^{2}+2\varepsilon f_{\delta}^{\frac{p-2}{2}}|\nabla_{b}\varphi
_{0}|^{2}\\
&  \leq C(f_{\delta}^{\frac{p}{2}}+f_{\delta}^{\frac{p+2}{2}})+C\varepsilon
f_{\delta}^{\frac{p}{2}}e_{0}-4f_{\delta}^{\frac{p-2}{2}}\langle J\nabla
_{b}\varphi^{k},\nabla_{b}\varphi_{0}^{k}\rangle.
\end{align*}
$(ii)$ However, if \ the sectional curvature of $(N,g_{ij})$ is nonpositive
\[
K^{N}\leq0,
\]
it follows from (\ref{32c}) and (\ref{32b}) that%
\begin{align}
&  \frac{\partial f_{\delta}}{\partial t}-\operatorname{div}_{b}(f_{\delta
}^{\frac{p-2}{2}}\nabla_{b}f_{\delta})-(p-2)\operatorname{div}_{b}(f_{\delta
}^{\frac{p-4}{2}}\langle \nabla_{b}f_{\delta},\nabla_{b}\varphi^{k}%
\rangle \nabla_{b}\varphi^{k})\nonumber \\
&  +\frac{p-2}{2}f_{\delta}^{\frac{p-4}{2}}|\nabla_{b}f_{\delta}%
|^{2}+2f_{\delta}^{\frac{p-2}{2}}|\nabla_{b}^{2}\varphi^{k}|^{2}\\
&  \leq Cf_{\delta}^{\frac{p}{2}}-4f_{\delta}^{\frac{p-2}{2}}\langle
J\nabla_{b}\varphi^{k},\nabla_{b}\varphi_{0}^{k}\rangle.\nonumber
\end{align}
and%
\begin{align}
&  \frac{\partial}{\partial t}e_{0}(\varphi)-\operatorname{div}_{b}(f_{\delta
}^{\frac{p-2}{2}}\nabla_{b}e_{0}(\varphi))-2\operatorname{div}_{b}((f_{\delta
}^{\frac{p-2}{2}})_{0}\nabla_{b}\varphi^{k})\varphi_{0}^{k}\\
&  \leq-2f_{\delta}^{\frac{p-2}{2}}|\nabla_{b}\varphi_{0}|^{2}.\nonumber
\end{align}
All these imply%
\begin{align*}
&  \frac{\partial g}{\partial t}-\operatorname{div}_{b}(f_{\delta}^{\frac
{p-2}{2}}\nabla_{b}g)-(p-2)\operatorname{div}_{b}(f_{\delta}^{\frac{p-4}{2}%
}\langle \nabla_{b}f_{\delta},\nabla_{b}\varphi^{k}\rangle \nabla_{b}\varphi
^{k})\\
&  -2\varepsilon \operatorname{div}_{b}((f_{\delta}^{\frac{p-2}{2}})_{0}%
\nabla_{b}\varphi^{k})\varphi_{0}^{k}+\frac{p-2}{2}f_{\delta}^{\frac{p-4}{2}%
}|\nabla_{b}f_{\delta}|^{2}+2f_{\delta}^{\frac{p-2}{2}}|\nabla_{b}^{2}%
\varphi|^{2}+2\varepsilon f_{\delta}^{\frac{p-2}{2}}|\nabla_{b}\varphi
_{0}|^{2}\\
&  \leq Cf_{\delta}^{\frac{p}{2}}-4f_{\delta}^{\frac{p-2}{2}}\langle
J\nabla_{b}\varphi^{k},\nabla_{b}\varphi_{0}^{k}\rangle.
\end{align*}

This completes the proof of this lemma.
\end{proof}

\section{Proof of Main Results}

In this section, we will prove a uniform estimate for the $p$-energy density
and then the global existence and asymptotic convergence of the $p$%
-pseudoharmonic map heat flow.

\begin{lemma}
\label{l33} Let $(M^{2n+1},J,\theta)$ be a closed Sasakian manifold and
$(N,g_{ij})$ be a compact Riemannian manifold. Let $u_{0}\in C^{2,\alpha
}(M,N)$, $0<\alpha<1,$ $||\nabla u_{0}||_{L^{\infty}(M)}\leq K$ and
$\varphi_{\delta}$ is the solution of the regularized equation (\ref{4a}).

$(i)$\ For $g=|\nabla_{b}\varphi_{\delta}|^{2}+\delta+\varepsilon e_{0}$ and
all $q\geq \frac{p}{2}$, there exists $\varepsilon_{1}>0$ depending on $M,N$
and $q$ such that if
\begin{equation}
\sup_{0\leq t<T^{\prime}}||g(t,\cdot)||_{L^{n+1}(M)}\leq \varepsilon_{1},
\label{2016}%
\end{equation}
then
\begin{equation}
\sup_{1\leq t_{1}\leq T^{\prime}}||g||_{L^{q}([t_{1}-1,t_{1}]\times M)}\leq
C,\quad \text{when}\quad T^{\prime}>1 \label{33a}%
\end{equation}
and
\begin{equation}
||g||_{L^{q}([0,T^{\prime})\times M)}\leq C,\quad \text{when}\quad T^{\prime
}\leq1, \label{33b}%
\end{equation}
where $C$ is a constant depending on $K,M,N$ and $q$.

$(ii)$ In addition, if \ the sectional curvature of $(N,g_{ij})$ is
nonpositive
\[
K^{N}\leq0,
\]
then (\ref{33a}) and (\ref{33b}) hold without assumption (\ref{2016}).
\end{lemma}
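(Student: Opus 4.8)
The plan is to run a De Giorgi--Moser iteration (\cite{m}, \cite{fr1}, \cite{fr2}, \cite{cc1}) for the \emph{total} density $g=f_\delta+\varepsilon e_0$ starting from the differential inequalities of Corollary \ref{c1}; here $\varepsilon$ is the fixed small constant already used (via Young's inequality) in passing from Lemma \ref{l32} to Corollary \ref{c1}, so the CR mixed term $\langle J\nabla_b\varphi^k,\nabla_b\varphi_0^k\rangle$ has been disposed of. One uses throughout that $f_\delta\le g$, $\varepsilon e_0\le g$, the pointwise bound $|\nabla_bg|^2\le C\,g\big(|\nabla_b^2\varphi|^2+\varepsilon|\nabla_b\varphi_0|^2\big)$, and the parabolic Folland--Stein inequality on $M^{2n+1}$ (homogeneous dimension $Q=2n+2$, so $n+1=Q/2$): for $w\in L^\infty([0,T];L^2(M))\cap L^2([0,T];S^{1,2}(M))$,
\[
\int_0^T\!\!\int_M w^{2\kappa}\,d\mu\,ds\le C\Big(\sup_{0\le s\le T}\int_M w^2\,d\mu\Big)^{\kappa-1}\int_0^T\!\!\int_M\big(|\nabla_bw|^2+w^2\big)\,d\mu\,ds,\qquad\kappa=\tfrac{n+2}{n+1}.
\]
The base rung: in $(i)$ the hypothesis (\ref{2016}) says $\sup_t\|g(t,\cdot)\|_{L^{n+1}(M)}\le\varepsilon_1$, which by H\"older already yields (\ref{33a}), (\ref{33b}) for every $q\le n+1$ and serves as the starting point; in $(ii)$ the energy identity (\ref{t31b}) (obtained by testing (\ref{4a}) with $\partial_s\varphi_\delta$) gives $\sup_t\int_Mf_\delta^{p/2}\,d\mu\le pE_{p,1}(u_0)$, which together with the companion $\mathbf{T}$-energy bound coming from (\ref{32b}) gives the starting rung $q=p/2$.

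For the inductive step, fix a level $\beta\ge p/2$ and a Lipschitz time-cutoff $\chi(s)$ equal to $1$ on the target interval, supported in a slightly larger interval of length $\le1$ and vanishing near its left endpoint when $T'>1$, and $\chi\equiv1$ on $[0,T']$ when $T'\le1$. Multiply the inequality of Corollary \ref{c1} by $g^{\beta-1}\chi^2$, integrate over $M\times[0,t_1]$, and integrate the three divergence terms by parts using the CR divergence theorem (\cite{ccw}). The gradient cross terms generated by $\nabla_b(g^{\beta-1})$ combine with the explicit good terms $\tfrac{p-2}{2}f_\delta^{(p-4)/2}|\nabla_bf_\delta|^2+2f_\delta^{(p-2)/2}|\nabla_b^2\varphi|^2+\varepsilon f_\delta^{(p-2)/2}|\nabla_b\varphi_0|^2$ into a coercive expression dominating a (weighted) gradient square of a power of $g$ --- using $p>1$ and the smallness of the fixed $\varepsilon$ --- while the $\mathbf{T}$-derivative term $2\varepsilon\operatorname{div}_b((f_\delta^{(p-2)/2})_0\nabla_b\varphi^k)\varphi_0^k$, integrated by parts only in the $Z_\alpha,Z_{\bar\alpha}$ directions, is absorbed the same way. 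This produces, schematically,
\[
\sup_{0\le s\le t_1}\int_Mg^\beta\chi^2\,d\mu+\int_0^{t_1}\!\!\int_M\big|\nabla_b\big(g^{\beta/2}\big)\big|^2\chi^2\,d\mu\,ds\ \le\ C\beta^2\int_0^{t_1}\!\!\int_M\Big(g^\beta|\chi_s|\chi+g^{\beta+p/2}\chi^2\Big)\,d\mu\,ds
\]
in case $(i)$, and the same with the exponent $\beta+p/2$ on the right replaced by $\beta+p/2-1$ in case $(ii)$ --- this drop by one power of $g$ is exactly the effect of $K^N\le0$, which removes the cubic curvature term $f_\delta^{(p+2)/2}$ of (\ref{32A}) in passing to (\ref{32B}).

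Feed $w=g^{\beta/2}$ into the parabolic Folland--Stein inequality. In case $(ii)$ the right-hand degree $\beta+p/2-1$ is strictly below the critical degree $\beta\kappa=\beta+\tfrac{\beta}{n+1}$ once $\beta>\tfrac{(n+1)(p-2)}{2}$ (automatic for $p\le2$, reached after finitely many rungs for $p>2$, the first rungs handled by interpolating between the $L^\beta$- and $L^{\beta\kappa}$-levels and using Young); absorbing a small fraction of $\int\!\int g^{\beta\kappa}$ into the left side then gives a Moser recursion whose iteration from $q=p/2$ yields (\ref{33a}), (\ref{33b}) with no smallness needed. In case $(i)$ the term $g^{\beta+p/2}$ sits at or above the critical degree for $\beta\le\tfrac{(n+1)p}{2}$, so Sobolev alone does not absorb it; instead write $g^{\beta+p/2}=(g^{\beta/2})^2g^{p/2}$, apply H\"older in space with the pair $(\tfrac{n+1}{n},n+1)$ and the stationary embedding $S^{1,2}\hookrightarrow L^{2(n+1)/n}$ to get $\int_Mg^{\beta+p/2}\,d\mu\le C\big(\int_M(|\nabla_bg^{\beta/2}|^2+g^\beta)\,d\mu\big)\,\|g^{p/2}\|_{L^{n+1}(M)}$, and use (\ref{2016}): for $p\le2$ the last factor is $\le C\varepsilon_1^{p/2}$ directly, and for $2<p\le2n+2$ a slightly more careful H\"older/interpolation distributes the surplus power $g^{(p-2)/2}$, the condition $p\le Q$ being precisely what makes this admissible (the CR analogue of the condition $p\le\dim M$ of \cite{fr1}, \cite{fr2}). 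Taking $\varepsilon_1=\varepsilon_1(M,N,q)$ small enough to beat the finitely many Sobolev constants met while raising the level from $n+1$ past $\tfrac{(n+1)p}{2}$, and then running the (now free) subcritical rungs up to $q$, closes the recursion; the $\chi_s$-terms are handled in the standard Moser manner of shrinking cylinders when $T'>1$, and for $T'\le1$ they drop out while the initial contribution $\int_Mg(\cdot,0)^\beta\,d\mu\le C(K,M,N)$ enters instead.

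The hard part is this inductive step on a \emph{parabolic} cylinder in the presence of the degenerate weight $f_\delta^{(p-2)/2}$: one must extract a genuine unweighted $L^2$ gradient bound for a power of $g$ from the weighted good terms --- which for $p>2$ degenerate where $|\nabla_b\varphi|$ is small --- while keeping every constant independent of $\delta$, and, in case $(i)$, absorb a \emph{critical} nonlinearity, which is possible only under $p\le Q=2n+2$ and the smallness (\ref{2016}). These points are handled exactly as for the $p$-harmonic map heat flow (\cite{fr1}, \cite{fr2}) and its pseudoharmonic prototype (\cite{cc1}), the CR-specific mixed term having already been eliminated at the level of Corollary \ref{c1}; the bookkeeping that separates $T'>1$ (shrinking cylinders) from $T'\le1$ (initial-datum rung) is routine.
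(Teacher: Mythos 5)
Your proposal follows essentially the same route as the paper: a Moser iteration for $g=f_{\delta}+\varepsilon e_{0}$ built on the differential inequality of Lemma \ref{l32}/Corollary \ref{c1}, with the CR Sobolev inequality supplying the gain in integrability, the smallness \eqref{2016} used to absorb the supercritical term $g^{r+\frac{p}{2}+1}$ in case $(i)$, and the disappearance of the $f_{\delta}^{\frac{p+2}{2}}$ term under $K^{N}\leq0$ making case $(ii)$ subcritical. The main cosmetic difference is that you work globally in space with only a time cutoff, whereas the paper localizes on parabolic cylinders $Q_{R}=(t_{0}-R,t_{0})\times B(x_{0},R)$ with space--time cutoffs $\phi=\psi\eta$ and then uses compactness of $M$; on a closed manifold both are viable provided you use the global embedding $S^{1,2}(M)\hookrightarrow L^{2(n+1)/n}(M)$ in place of \eqref{sobo}.

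One step of your sketch is off as written, and it matters for $p>2$. After testing with $g^{\beta-1}$ the coercive gradient term is $\int f_{\delta}^{\frac{p-2}{2}}g^{\beta-2}|\nabla_{b}g|^{2}$, which via $f_{\delta}\geq\frac{1}{2}g$ (the paper's \eqref{112}) controls $\int\big|\nabla_{b}\big(g^{\frac{\beta+\frac{p}{2}-1}{2}}\big)\big|^{2}$, \emph{not} merely $\int|\nabla_{b}(g^{\beta/2})|^{2}$ as in your schematic inequality. Correspondingly, the correct absorption of the critical term is the paper's split $g^{\beta+\frac{p}{2}}=\big(g^{\frac{\beta+\frac{p}{2}-1}{2}}\big)^{2}\cdot g$, H\"older with exponents $\big(\tfrac{n+1}{n},\,n+1\big)$ and Sobolev applied to $g^{\frac{\beta+\frac{p}{2}-1}{2}}$ (this is the content of \eqref{cch9}--\eqref{cch102}), which peels off exactly the factor $\|g\|_{L^{n+1}(M)}\leq\varepsilon_{1}$ for \emph{every} $p$. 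Your split $g^{\beta+\frac{p}{2}}=(g^{\beta/2})^{2}g^{p/2}$ instead produces the factor $\|g^{p/2}\|_{L^{n+1}(M)}=\|g\|_{L^{p(n+1)/2}(M)}^{p/2}$, which \eqref{2016} controls only when $p\leq2$; the ``more careful H\"older'' you invoke for $2<p\leq2n+2$ is precisely the paper's redistribution above, so the gap is repairable but should be carried out, since it is the heart of case $(i)$. A second point you pass over lightly is the absorption of the $r$-weighted cross terms generated by integrating the divergence terms by parts (the analogues of \eqref{102}--\eqref{104}): their coefficients grow like $r^{2}\varepsilon$, and the paper absorbs them by letting $\varepsilon$ shrink with $r$ (the choice $\frac{1}{r^{4}}<\varepsilon<\frac{1}{r^{3}}$ leading to \eqref{110}--\eqref{113}); a single fixed $\varepsilon$ as in your write-up does not obviously suffice uniformly over the iteration rungs, so this bookkeeping deserves the same care.
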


\begin{proof}
$(i)$\ Let us first prove (\ref{33a}). Fix $t_{0}\geq1$ and $x_{0}\in M$. Let
$\rho<R\leq R_{0}=\inf(R_{M},1)$ where $R_{M}$ denotes the radius of ball on
which CR Sobolev inequality (\ref{sobo}) holds and set $Q_{R}=(t_{0}%
-R,t_{0})\times B(x_{0},R).$ We choose $\psi \in C_{0}(B(x_{0},R))$ such that
$\psi=1,\ B(x_{0},\rho);\  \ 0\leq \psi \leq1;\ |\nabla_{b}\psi|\leq
c(R-\rho)^{-1},$ and let $\eta \in C^{\infty}(R)$ with $\eta(t)=0,\  \ t\leq
t_{0}-R;\  \eta(t)=1,\ t\geq t_{0}-\rho;\ 0\leq \eta(t)\leq1;\  \ |\eta^{\prime
}|\leq c(R-\rho)^{-1}.$ We set $\phi(x,t)=\psi(x)\eta(t).$ Multiply inequality
(\ref{32A}) by $g^{r}\phi^{k}$, $r\geq0$, $k\in N$, and integrate on
$[t_{0}-R,t]\times B(x_{0},R),\ t_{0}-R<t\leq t_{0}$, \ we have
\begin{align}
&  \frac{1}{r+1}\sup_{t\leq t_{0}}\int_{B_{x_{0}}(R)}g^{r+1}\phi^{k}d\mu
+r\int_{Q_{R}}f_{\delta}^{\frac{p-2}{2}}g^{r-1}\phi^{k}|\nabla_{b}g|^{2}d\mu
dt\nonumber \label{101}\\
&  +(p-2)r\int_{Q_{R}}f_{\delta}^{\frac{p-4}{2}}\langle \nabla_{b}f_{\delta
},\nabla_{b}\varphi^{l}\rangle^{2}g^{r-1}\phi^{k}d\mu dt+(p-2)\varepsilon
\int_{Q_{R}}f_{\delta}^{\frac{p-4}{2}}(f_{\delta})_{0}^{2}g^{r}\phi^{k}d\mu
dt\nonumber \\
&  +\frac{p-2}{2}\int_{Q_{R}}f_{\delta}^{\frac{p-4}{2}}g^{r}\phi^{k}%
|\nabla_{b}f_{\delta}|^{2}d\mu dt+2\int_{Q_{R}}f_{\delta}^{\frac{p-2}{2}%
}|\nabla_{b}^{2}\varphi^{l}|^{2}g^{r}\phi^{k}d\mu dt\nonumber \\
&  +2\varepsilon \int_{Q_{R}}f_{\delta}^{\frac{p-2}{2}}g^{r}\phi^{k}|\nabla
_{b}\varphi_{0}^{l}|^{2}d\mu dt\nonumber \\
&  \leq \frac{k}{1+r}\int_{Q_{R}}g^{r+1}\phi^{k-1}|\frac{\partial \phi}{\partial
t}|d\mu dt-k\int_{Q_{R}}f_{\delta}^{\frac{p-2}{2}}g^{r}\phi^{k-1}\langle
\nabla_{b}g,\nabla_{b}\phi \rangle d\mu dt\\
&  -(p-2)k\int_{Q_{R}}f_{\delta}^{\frac{p-4}{2}}\langle \nabla_{b}f_{\delta
},\nabla_{b}\varphi^{l}\rangle \langle \nabla_{b}\varphi^{l},\nabla_{b}%
\phi \rangle \phi^{k-1}g^{r}d\mu dt\nonumber \\
&  -(p-2)k\varepsilon \int_{Q_{R}}f_{\delta}^{\frac{p-4}{2}}(f_{\delta}%
)_{0}\varphi_{0}^{l}g^{r}\phi^{k-1}\langle \nabla_{b}\varphi^{l},\nabla_{b}%
\phi \rangle d\mu dt\nonumber \\
&  -(p-2)r\varepsilon \int_{Q_{R}}f_{\delta}^{\frac{p-4}{2}}(f_{\delta}%
)_{0}g^{r-1}\varphi_{0}^{l}\phi^{k}\langle \nabla_{b}\varphi^{l},\nabla
_{b}g\rangle d\mu dt\nonumber \\
&  -(p-2)r\varepsilon \int_{Q_{R}}f_{\delta}^{\frac{p-4}{2}}\langle \nabla
_{b}f_{\delta},\nabla_{b}\varphi^{l}\rangle \langle \nabla_{b}\varphi^{l}%
,\nabla_{b}e_{0}\rangle g^{r-1}\phi^{k}d\mu dt\nonumber \\
&  +C\int_{Q_{R}}(f_{\delta}^{\frac{p}{2}}+f_{\delta}^{\frac{p+2}{2}%
}+\varepsilon f_{\delta}^{\frac{p}{2}}e_{0})g^{r}\phi^{k}d\mu dt-4\int_{Q_{R}%
}f_{\delta}^{\frac{p-2}{2}}g^{r}\phi^{k}\langle J\nabla_{b}\varphi^{l}%
,\nabla_{b}\varphi_{0}^{l}\rangle d\mu dt\nonumber
\end{align}
By using Young's inequalities, we have
\begin{align}
&  -(p-2)r\varepsilon \int_{Q_{R}}f_{\delta}^{\frac{p-4}{2}}\langle \nabla
_{b}f_{\delta},\nabla_{b}\varphi^{l}\rangle \langle \nabla_{b}\varphi^{l}%
,\nabla_{b}e_{0}\rangle g^{r-1}\phi^{k}d\mu dt\nonumber \label{102}\\
&  \leq \frac{(p-2)r}{4}\int_{Q_{R}}f_{\delta}^{\frac{p-4}{2}}\langle \nabla
_{b}f_{\delta},\nabla_{b}\varphi^{l}\rangle^{2}g^{r-1}\phi^{k}d\mu \nonumber \\
&  +(p-2)r\varepsilon^{2}\int_{Q_{R}}f_{\delta}^{\frac{p-4}{2}}\langle
\nabla_{b}\varphi^{l},\nabla_{b}e_{0}\rangle^{2}g^{r-1}\phi^{k}d\mu dt\\
&  \leq \frac{(p-2)r}{4}\int_{Q_{R}}f_{\delta}^{\frac{p-4}{2}}\langle \nabla
_{b}f_{\delta},\nabla_{b}\varphi^{l}\rangle^{2}g^{r-1}\phi^{k}d\mu \nonumber \\
&  +4(p-2)r\varepsilon^{2}\int_{Q_{R}}f_{\delta}^{\frac{p-2}{2}}|\nabla
_{b}\varphi_{0}^{l}|^{2}e_{0}g^{r-1}\phi^{k}d\mu dt\nonumber
\end{align}
and
\begin{align}
&  -(p-2)r\varepsilon \int_{Q_{R}}f_{\delta}^{\frac{p-4}{2}}(f_{\delta}%
)_{0}g^{r-1}\varphi_{0}^{l}\phi^{k}\langle \nabla_{b}\varphi^{l},\nabla
_{b}g\rangle d\mu dt\nonumber \label{103}\\
&  =-(p-2)r\varepsilon \int_{Q_{R}}f_{\delta}^{\frac{p-4}{2}}(f_{\delta}%
)_{0}g^{r-1}\varphi_{0}^{l}\langle \nabla_{b}\varphi^{l},\nabla_{b}f_{\delta
}\rangle \phi^{k}d\mu dt\nonumber \\
&  -(p-2)r\varepsilon \int_{Q_{R}}f_{\delta}^{\frac{p-4}{2}}(f_{\delta}%
)_{0}g^{r-1}\varphi_{0}^{l}\langle \nabla_{b}\varphi^{l},\varepsilon \nabla
_{b}e_{0}\rangle \phi^{k}d\mu dt\nonumber \\
&  \leq \frac{(p-2)\varepsilon}{4}\int_{Q_{R}}f_{\delta}^{\frac{p-4}{2}%
}(f_{\delta})_{0}^{2}g^{r}\phi^{k}d\mu+(p-2)r^{2}\varepsilon \int_{Q_{R}%
}f_{\delta}^{\frac{p-4}{2}}g^{r-2}e_{0}\langle \nabla_{b}\varphi^{l},\nabla
_{b}f_{\delta}\rangle^{2}\phi^{k}d\mu dt\\
&  +\frac{(p-2)\varepsilon}{4}\int_{M}f_{\delta}^{\frac{p-4}{2}}(f_{\delta
})_{0}^{2}g^{r}\phi^{k}d\mu+(p-2)r^{2}\varepsilon^{3}\int_{M}f_{\delta}%
^{\frac{p-4}{2}}g^{r-2}e_{0}f_{\delta}|\nabla_{b}\varphi_{0}^{l}|^{2}e_{0}%
^{2}\phi^{k}d\mu dt\nonumber \\
&  =\frac{(p-2)\varepsilon}{2}\int_{M}f_{\delta}^{\frac{p-4}{2}}(f_{\delta
})_{0}^{2}g^{r}\phi^{k}d\mu+(p-2)r^{2}\varepsilon \int_{Q_{R}}f_{\delta}%
^{\frac{p-4}{2}}g^{r-2}e_{0}\langle \nabla_{b}\varphi^{l},\nabla_{b}f_{\delta
}\rangle^{2}\phi^{k}d\mu dt\nonumber \\
&  +(p-2)r^{2}\varepsilon^{3}\int_{M}f_{\delta}^{\frac{p-4}{2}}g^{r-2}%
e_{0}f_{\delta}|\nabla_{b}\varphi_{0}^{l}|^{2}e_{0}^{2}\phi^{k}d\mu
dt\nonumber
\end{align}
and
\begin{align}
&  -(p-2)k\varepsilon \int_{Q_{R}}f_{\delta}^{\frac{p-4}{2}}(f_{\delta}%
)_{0}\varphi_{0}^{l}g^{r}\phi^{k-1}\langle \nabla_{b}\varphi^{l},\nabla_{b}%
\phi \rangle d\mu dt\label{104}\\
&  \leq \frac{(p-2)\varepsilon}{4}\int_{Q_{R}}f_{\delta}^{\frac{p-4}{2}%
}(f_{\delta})_{0}^{2}g^{r}\phi^{k}d\mu+(p-2)k^{2}\varepsilon \int_{Q_{R}%
}f_{\delta}^{\frac{p-4}{2}}g^{r}e_{0}f_{\delta}|\nabla_{b}\phi|^{2}\phi
^{k-2}d\mu dt\nonumber
\end{align}
and
\begin{align}
&  -(p-2)k\int_{Q_{R}}f_{\delta}^{\frac{p-4}{2}}\langle \nabla_{b}f_{\delta
},\nabla_{b}\varphi^{l}\rangle \langle \nabla_{b}\varphi^{l},\nabla_{b}%
\phi \rangle \phi^{k-1}g^{r}d\mu dt\nonumber \label{105}\\
&  \leq(p-2)\int_{Q_{R}}f_{\delta}^{\frac{p-4}{2}}g^{r-1}\langle \nabla
_{b}f_{\delta},\nabla_{b}\varphi^{l}\rangle^{2}\phi^{k}d\mu dt\\
&  +(p-2)\frac{k^{2}}{4}\int_{Q_{R}}f_{\delta}^{\frac{p-4}{2}}g^{r+1}%
f_{\delta}|\nabla_{b}\phi|^{2}\phi^{k-2}d\mu dt\nonumber
\end{align}
and
\begin{align}
&  -k\int_{Q_{R}}f_{\delta}^{\frac{p-2}{2}}g^{r}\phi^{k-1}\langle \nabla
_{b}g,\nabla_{b}\phi \rangle d\mu dt\nonumber \label{106}\\
&  \leq \frac{r}{2}\int_{Q_{R}}f_{\delta}^{\frac{p-2}{2}}g^{r-1}|\nabla
_{b}g|^{2}\phi^{k}d\mu dt+\frac{2k^{2}}{r}\int_{Q_{R}}\int_{Q_{R}}f_{\delta
}^{\frac{p-2}{2}}g^{r+1}|\nabla_{b}\phi|^{2}\phi^{k-2}d\mu dt
\end{align}
and
\begin{align}
&  -4\int_{Q_{R}}f_{\delta}^{\frac{p-2}{2}}g^{r}\phi^{k}\langle J\nabla
_{b}\varphi^{l},\nabla_{b}\varphi_{0}^{l}\rangle d\mu dt\nonumber \label{107}\\
&  \leq \frac{\varepsilon}{4}\int_{Q_{R}}f_{\delta}^{\frac{p-2}{2}}g^{r}%
\phi^{k}|\nabla_{b}\varphi_{0}^{l}|^{2}d\mu dt+\frac{16}{\varepsilon}%
\int_{Q_{R}}f_{\delta}^{\frac{p-2}{2}}g^{r}\phi^{k}f_{\delta}d\mu dt.
\end{align}
From (\ref{101})-(\ref{107}), we have
\begin{align}
&  \frac{1}{1+r}{\sup}_{t\leq t_{0}}\int_{B(x_{0},R)}g^{1+r}\phi^{k}d\mu
+\frac{r}{2}\int_{Q_{R}}f_{\delta}^{\frac{p-2}{2}}g^{r-1}|\nabla_{b}g|^{2}%
\phi^{k}d\mu dt\nonumber \label{109}\\
&  +(p-2)(\frac{3r}{4}-1)\int_{Q_{R}}f_{\delta}^{\frac{p-4}{2}}g^{r-1}%
\langle \nabla_{b}f_{\delta},\nabla_{b}\varphi^{l}\rangle^{2}\phi^{k}d\mu
dt\nonumber \\
&  -(p-2)r^{2}\varepsilon \int_{Q_{R}}f_{\delta}^{\frac{p-4}{2}}g^{r-2}%
e_{0}\langle \nabla_{b}\varphi^{l},\nabla_{b}f_{\delta}\rangle^{2}\phi^{k}d\mu
dt\nonumber \\
&  +\frac{p-2}{2}\int_{Q_{R}}f_{\delta}^{\frac{p-4}{2}}g^{r}\phi^{k}%
|\nabla_{b}f_{\delta}|^{2}d\mu dt+2\int_{Q_{R}}f_{\delta}^{\frac{p-2}{2}%
}|\nabla_{b}^{2}\varphi^{l}|^{2}g^{r}\phi^{k}d\mu dt\nonumber \\
&  +\frac{7\varepsilon}{4}\int_{Q_{R}}f_{\delta}^{\frac{p-2}{2}}|\nabla
_{b}\varphi_{0}^{l}|^{2}g^{r}\phi^{k}d\mu dt-(p-2)r^{2}\varepsilon^{3}%
\int_{Q_{R}}f_{\delta}^{\frac{p-2}{2}}g^{r-2}e_{0}^{2}|\nabla_{b}\varphi^{l}
_{0}|^{2}\phi^{k}d\mu dt\nonumber \\
&  -4(p-2)r\varepsilon^{2}\int_{Q_{R}}f_{\delta}^{\frac{p-2}{2}}|\nabla
_{b}\varphi^{l}_{0}|^{2}e_{0}g^{r-1}\phi^{k}d\mu dt\nonumber \\
&  \leq \frac{k}{1+r}\int_{Q_{R}}g^{1+r}\phi^{k-1}|\frac{\partial \phi}{\partial
t}|d\mu dt+(p-2)k^{2}\varepsilon \int_{Q_{R}}f_{\delta}^{\frac{p-2}{2}}%
g^{r}e_{0}|\nabla_{b}\phi|^{2}\phi^{k-2}d\mu dt\\
&  +\frac{2k^{2}}{r}\int_{Q_{R}}f_{\delta}^{\frac{p-2}{2}}g^{1+r}|\nabla
_{b}\phi|^{2}\phi^{k-2}d\mu dt+(p-2)\frac{k^{2}}{4}\int_{Q_{R}}f_{\delta
}^{\frac{p-2}{2}}g^{1+r}|\nabla_{b}\phi|^{2}\phi^{k-2}d\mu dt\nonumber \\
&  +\frac{16}{\varepsilon}\int_{Q_{R}}f_{\delta}^{\frac{p}{2}}g^{r}\phi
^{k}d\mu dt+C\int_{Q_{R}}[f_{\delta}^{\frac{p}{2}}+f_{\delta}^{\frac{p+2}{2}%
}+\varepsilon f_{\delta}^{\frac{p}{2}}e_{0}]g^{r}\phi^{k}d\mu dt.\nonumber
\end{align}
For any fixed $t$, then we choose $\frac{1}{r^{4}}<\varepsilon<\frac{1}{r^{3}%
}$ such that
\begin{equation}
(r^{2}-c)\varepsilon e_{0}\leq cf_{\delta} \label{110}%
\end{equation}
for some positive constant $c$ and
\[
\varepsilon r^{2}e_{0}\leq cf_{\delta}+c\varepsilon e_{0}.
\]
That is
\[
\frac{\varepsilon e_{0}}{f_{\delta}+\varepsilon e_{0}}\leq \frac{c}{r^{2}}%
\leq \frac{\frac{3r}{4}-1}{r^{2}}%
\]
for some $r$ is large enough and then
\begin{equation}
(\frac{3r}{4}-1)g-r^{2}\varepsilon e_{0}\geq0. \label{111}%
\end{equation}
From (\ref{110}), we have
\[
f_{\delta}\geq \frac{e_{0}}{r^{2}}\geq \varepsilon e_{0},
\]
so we have
\begin{equation}
f_{\delta}=\frac{1}{2}f_{\delta}+\frac{1}{2}f_{\delta}\geq \frac{1}{2}%
f_{\delta}+\frac{1}{2}\varepsilon e_{0}=\frac{1}{2}g \label{112}%
\end{equation}
and
\begin{equation}
\frac{7}{4}(f_{\delta}+\varepsilon e_{0})^{2}-(p-2)r^{2}\varepsilon^{2}%
e_{0}^{2}-4(p-2)\varepsilon e_{0}(f_{\delta}+\varepsilon e_{0})\geq0.
\label{113}%
\end{equation}
From (\ref{109})-(\ref{113}),
\begin{align}
&  {\sup}_{t\leq t_{0}}\int_{B(x_{0},R)}g^{1+r}\phi^{k}d\mu+\int_{Q_{R}%
}g^{r+\frac{p}{2}-2}|\nabla_{b}g|^{2}\phi^{k}d\mu dt\nonumber \\
&  \leq C\int_{Q_{R}}g^{1+r}\phi^{k-1}|\frac{\partial \phi}{\partial t}|d\mu
dt+C\int_{Q_{R}}g^{r+\frac{p}{2}}\phi^{k-2}|\nabla_{b}\phi|^{2}d\mu
dt\label{320}\\
&  +C\int_{Q_{R}}(g^{r+\frac{p}{2}}+g^{r+\frac{p}{2}+1})\phi^{k}d\mu
dt,\nonumber
\end{align}
where $C$ is a positive constant depending on $k,M,N,r,\varepsilon,p$. Thus
\begin{align}
&  {\sup}_{t\leq t_{0}}\int_{B(x_{0},R)}g^{1+r}\phi^{k}d\mu+\int_{Q_{R}%
}g^{r+\frac{p}{2}-2}|\nabla_{b}g|^{2}\phi^{k}d\mu dt\nonumber \\
&  \leq C\int_{Q_{R}}g^{1+r}\phi^{k-1}|\frac{\partial \phi}{\partial t}|d\mu
dt+C\int_{Q_{R}}g^{r+\frac{p}{2}}\phi^{k-2}(|\nabla_{b}\phi|^{2}+\phi^{2})d\mu
dt\label{cch10}\\
&  +C\int_{Q_{R}}g^{r+\frac{p}{2}+1}\phi^{k}d\mu dt.\nonumber
\end{align}
Let $h=\frac{r+\frac{p}{2}+1}{r+1}$ and $l=\frac{r+\frac{p}{2}+1}{\frac{p}{2}%
}$ and fix $k\geq l$ large enough. By H\"{o}lder's inequality and Young's
inequality, we have
\begin{align}
&  \int_{Q_{R}}g^{1+r}\phi^{k-1}|\frac{\partial \phi}{\partial t}|d\mu
dt\leq(\int_{Q_{R}}g^{r+\frac{p}{2}+1}\phi^{k}d\mu dt)^{\frac{r+1}{r+\frac
{p}{2}+1}}\nonumber \\
&  (\int_{Q_{R}}\phi^{k-\frac{2r+p+2}{p}}|\frac{\partial \phi}{\partial
t}|^{\frac{2r+p+2}{p}}d\mu dt)^{\frac{p}{2r+p+2}}\label{cch7}\\
&  \leq \int_{Q_{R}}g^{r+\frac{p}{2}+1}\phi^{k}d\mu dt+C\int_{Q_{R}}%
\phi^{k-\frac{r+3}{2}}|\frac{\partial \phi}{\partial t}|^{\frac{2r+p+2}{p}}d\mu
dt\nonumber \\
&  \leq \int_{Q_{R}}g^{r+\frac{p}{2}+1}\phi^{k}d\mu dt+CR(R-\rho)^{-\frac
{2r+p+2}{p}}|B(x_{0},R)|,\nonumber
\end{align}
where $C$ is a constant depending on $M$, $p$ and $r$. By choosing
$k\geq2(r+\frac{p}{2}+1)$, and using H\"{o}lder's inequality and Young's
inequality, we have
\begin{align}
&  \int_{Q_{R}}g^{\frac{p}{2}+r}[\phi^{k}+\phi^{k-2}|\nabla_{b}\phi|^{2}]d\mu
dt\label{cch8}\\
&  \leq \int_{Q_{R}}g^{r+\frac{p}{2}+1}\phi^{k}d\mu dt+CR[1+(R-\rho
)^{-(2r+p+2)}]|B(x_{0},R)|,\nonumber
\end{align}
where $C$ is a constant depending on $M,p$ and $r$. It follows from
(\ref{cch10}), (\ref{cch7}) and (\ref{cch8}) that
\begin{align}
&  \sup_{t\leq t_{0}}\int_{B(x_{0},R)}g^{1+r}\phi^{k}d\mu+\int_{Q_{R}%
}g^{r+\frac{p}{2}-2}|\nabla_{b}g|^{2}\phi^{k}d\mu dt\leq C_{1}\int_{Q_{R}%
}g^{r+\frac{p}{2}+1}\phi^{k}d\mu dt\label{cch101}\\
&  +C_{1}R[1+(R-\rho)^{-(p+2r+2)}+(R-\rho)^{-\frac{2r+p+2}{p}}]|B(x_{0}%
,R)|,\nonumber
\end{align}
where $C_{1}$ is a constant depending on $M,N,p$ and $r$. We recall the
following CR Sobolev inequality (\cite{j}, \cite[Theorem C]{lu} or
\cite[(1.1)]{dls})%
\begin{equation}
(\int_{B(x_{0},R)}\omega^{q}d\mu)^{\frac{1}{q}}\leq c(\int_{B(x_{0},R)}%
|\nabla_{b}\omega|^{p}d\mu)^{\frac{1}{p}}, \label{sobolev}%
\end{equation}
where $\omega \in C_{0}^{1}(B(x_{0},R))$ and $c$ is a constant, provided that
$1\leq p<Q=2n+2$ and $\frac{1}{p}-\frac{1}{q}=\frac{1}{Q}.$ In particular for
$q=2,$ we have
\begin{equation}
(\int_{B(x_{0},R)}\omega^{2}d\mu)\leq c(\int_{B(x_{0},R)}|\nabla_{b}%
\omega|^{\frac{2(n+1)}{n+2}}d\mu)^{\frac{n+2}{n+1}}. \label{sobo}%
\end{equation}
Applying this inequality to $\omega=g^{\frac{r+\frac{p}{2}+1}{2}}\psi
^{\frac{k}{2}}$,
\begin{align}
&  \int_{B(x_{0},R)}g^{r+\frac{p}{2}+1}\psi^{k}d\mu \nonumber \label{cch9}\\
&  \leq c(\int_{B(x_{0},R)}|\nabla_{b}(g^{\frac{r+\frac{p}{2}+1}{2}}%
\psi^{\frac{k}{2}})|^{\frac{2n+2}{n+2}}d\mu)^{\frac{n+2}{n+1}}\nonumber \\
&  =c(\int_{B(x_{0},R)}|\frac{r+\frac{p}{2}+1}{2}g^{\frac{r+\frac{p}{2}-1}{2}%
}\psi^{\frac{k}{2}}\nabla_{b}g+\frac{k}{2}g^{\frac{r+\frac{p}{2}+1}{2}}%
\psi^{\frac{k-2}{2}}\nabla_{b}\psi|^{\frac{2n+2}{n+2}}d\mu)^{\frac{n+2}{n+1}%
}\nonumber \\
&  \leq C(\int_{B(x_{0},R)}|g^{\frac{r+\frac{p}{2}-1}{2}}\psi^{\frac{k}{2}%
}\nabla_{b}g|^{\frac{2n+2}{n+2}}d\mu)^{\frac{n+2}{n+1}}+C(\int_{B(x_{0}%
,R)}|g^{\frac{r+\frac{p}{2}+1}{2}}\psi^{\frac{k-2}{2}}\nabla_{b}\psi
|^{\frac{2n+2}{n+2}}d\mu)^{\frac{n+2}{n+1}}\\
&  \leq C_{2}(\int_{B(x_{0},R)}g^{n+1}d\mu)^{\frac{1}{n+1}}(\int_{B(x_{0}%
,R)}\psi^{k}g^{r+\frac{p}{2}-2}|\nabla_{b}g|^{2}d\mu)\nonumber \\
&  +C_{2}(\int_{B(x_{0},R)}|g^{\frac{r+\frac{p}{2}+1}{2}}\psi^{\frac{k-2}{2}%
}\nabla_{b}\psi|^{\frac{2n+2}{n+2}}d\mu)^{\frac{n+2}{n+1}}\nonumber \\
&  =C_{2}||g(\cdot,t)||_{L^{n+1}(B(x_{0},R))}(\int_{B(x_{0},R)}\psi
^{k}g^{r+\frac{p}{2}-2}|\nabla_{b}g|^{2}d\mu)\nonumber \\
&  +C_{2}(\int_{B(x_{0},R)}|g^{\frac{r+\frac{p}{2}+1}{2}}\psi^{\frac{k-2}{2}%
}\nabla_{b}\psi|^{\frac{2n+2}{n+2}}d\mu)^{\frac{n+2}{n+1}},\nonumber
\end{align}
where $C_{2}$ is a constant depending on $M,p$ and $r$. Multiply (\ref{cch9})
by $\eta^{k}(t)$ and integrating on $[t_{0}-R,t_{0}]$,
\begin{align}
&  \int_{Q_{R}}g^{r+\frac{p}{2}+1}\phi^{k}d\mu dt\nonumber \label{cch102}\\
&  \leq C_{2}\sup_{t_{0}-R<t\leq t_{0}}||g(\cdot,t)||_{L^{n+1}(B(x_{0}%
,R))}(\int_{Q_{R}}\phi^{k}g^{r+\frac{p}{2}-2}|\nabla_{b}g|^{2}d\mu dt)\\
&  +C_{2}\int_{t_{0}-R}^{t_{0}}(\int_{B(x_{0},R)}g^{(r+\frac{p}{2}%
+1)\frac{n+1}{n+2}}\psi^{\frac{(k-2)(n+1)}{n+2}}\eta^{\frac{k(n+1)}{n+2}%
}|\nabla_{b}\psi|^{\frac{2n+2}{n+2}}d\mu)^{\frac{n+2}{n+1}}dt.\nonumber
\end{align}
Set $\varepsilon_{1}=\frac{1}{2C_{1}C_{2}}$. Suppose that
\[
||g(\cdot,t)||_{L^{n+1}(B(x_{0},R))}\leq \varepsilon_{1}.
\]
It follows from (\ref{cch101}) and (\ref{cch102}) that
\begin{align*}
&  \sup_{t_{0}-R\leq t\leq t_{0}}\int_{B(x_{0},R)}g^{1+r}\phi^{k}d\mu
+\int_{Q_{R}}g^{r+\frac{p}{2}+1}\phi^{k}d\mu dt\\
&  \leq C\int_{t_{0}-R}^{t_{0}}(\int_{B(x_{0},R)}g^{(r+\frac{p}{2}%
+1)\frac{n+1}{n+2}}\psi^{\frac{(k-2)(n+1)}{n+2}}\eta^{\frac{k(n+1)}{n+2}%
}|\nabla_{b}\psi|^{\frac{2n+2}{n+2}}d\mu)^{\frac{n+2}{n+1}}dt\\
&  +CR[1+(R-\rho)^{-(p+2r+2)}+(R-\rho)^{-\frac{2r+p+2}{p}}]|B(x_{0},R)|.
\end{align*}
From the definition of $\phi$, we obtain,
\begin{align}
&  \sup_{t_{0}-\rho \leq t\leq t_{0}}\int_{B(x_{0},\rho)}g^{1+r}d\mu
+\int_{Q_{\rho}}g^{r+\frac{p}{2}+1}d\mu dt\nonumber \label{cch12}\\
&  \leq C(R-\rho)^{-2}\int_{t_{0}-R}^{t_{0}}(\int_{B(x_{0},R)}g^{\frac
{(r+\frac{p}{2}+1)(n+1)}{n+2}}d\mu)^{\frac{n+2}{n+1}}dt\nonumber \\
&  +CR[1+(R-\rho)^{-\frac{2r+p+2}{p}}+(R-\rho)^{-(p+2r+2)}]|B(x_{0},R)|\\
&  \leq C[1+\int_{t_{0}-R}^{t_{0}}(\int_{B(x_{0},R)}f^{\frac{(r+\frac{p}%
{2}+1)(n+1)}{n+2}}d\mu)^{\frac{n+2}{n+1}}dt],\nonumber
\end{align}
where $C$ is a constant depending on $M,N,R,r,p$ and $\rho$. Let
$d=\frac{(r+3)(n+1)}{n+2}+\frac{1}{n+2}(r+1)$. By using H\"{o}lder
inequality,
\begin{align}
&  \int_{t_{0}-\rho}^{t_{0}}(\int_{B(x_{0},\rho)}g^{\frac{(r+\frac{p}%
{2}+1)(n+1)}{n+2}+\frac{1}{n+2}(r+1)}d\mu)^{\frac{n+2}{n+1}}%
dt\nonumber \label{cch13}\\
&  \leq \int_{t_{0}-\rho}^{t_{0}}[\int_{B(x_{0},\rho)}g^{(r+\frac{p}{2}+1)}%
d\mu(\int_{B(x_{0},\rho)}g^{r+1}d\mu)^{\frac{1}{n+1}}]dt\\
&  \leq \sup_{t_{0}-\rho \leq t\leq t_{0}}(\int_{B(x_{0},\rho)}g^{1+r}%
d\mu)^{\frac{1}{n+1}}\int_{Q_{\rho}}g^{(r+\frac{p}{2}+1)}d\mu dt.\nonumber
\end{align}
From (\ref{cch12}) and (\ref{cch13}), we obtain
\begin{align}
&  \int_{t_{0}-\rho}^{t_{0}}(\int_{B(x_{0},\rho)}g^{\frac{(r+\frac{p}%
{2}+1)(n+1)}{n+2}+\frac{1}{n+2}(r+1)}d\mu)^{\frac{n+2}{n+1}}dt\label{cch14}\\
&  \leq C[1+\int_{t_{0}-R}^{t_{0}}(\int_{B(x_{0},R)}g^{\frac{(r+\frac{p}%
{2}+1)(n+1)}{n+2}}d\mu)^{\frac{n+2}{n+1}}dt]^{\frac{n+2}{n+1}},\nonumber
\end{align}
where $C$ is a constant depending on $M,N,R,r,p$ and $\rho$. We set
$\theta=1+\frac{1}{n+1}$ and for $s\in N$, $R_{s}=(1+2^{-s})\frac{R_{0}}{2}$.
Define $r_{s}=(\frac{2n-p+4}{2})\theta^{s}-1$, and $a_{s}=(r_{s}+\frac{p}%
{2}+1)\frac{n+1}{n+2}$. From the definitions of $r_{s},a_{s}$, we have
$a_{s+1}=(r_{s}+\frac{p}{2}+1)\frac{n+1}{n+2}+(r_{s}+1)\frac{1}{n+2}$. If
$\rho=R_{s+1}$, $R=R_{s}$ and $r=r_{s}$ in (\ref{cch14}), we have
\begin{align}
&  \int_{t_{0}-R_{s+1}}^{t_{0}}(\int_{B(x_{0},R_{s+1})}g^{a_{s+1}}d\mu
)^{\frac{n+2}{n+1}}dt\label{cch15}\\
&  \leq C_{s}[1+\int_{t_{0}-R_{s}}^{t_{0}}(\int_{B(x_{0},R_{s})}g^{a_{s}}%
d\mu)^{\frac{n+2}{n+1}}dt]^{\frac{n+2}{n+1}},\nonumber
\end{align}
where $C_{s}$ is a constant depending on $M,N$ and $s$. Since $a_{s}%
\rightarrow \infty$ when $s\rightarrow \infty$, by interating (\ref{cch15}), we
have for any $q\geq1$
\begin{equation}
||g||_{L^{q}([t_{0}-\frac{R_{0}}{2},t_{0}]\times B(x_{0},\frac{R_{0}}{2}%
))}\leq C, \label{cch16}%
\end{equation}
where the constant $C$ depends on $M,N,q$ and $||g(\cdot,t)||_{L^{a_{0}}(M)}$.
Since $a_{0}=n+1$ and by hypothesis $\sup_{0\leq t\leq T^{\prime}}%
||g(\cdot,t)||_{L^{n+1}(M)}\leq \varepsilon_{1}$, then $C$ depends only on
$M,N$ and $q$. Following the same steps as in proof of (\ref{cch16}) where we
take $\psi^{k}(x)$ instead of $\phi^{k}(x,t)$ and we integrate on $[0,t]$ (for
$t\in \lbrack0,1]$) instead of $[t_{0}-R,t_{0}]$, we can obtain the following
inequality:
\begin{equation}
||g||_{L^{q}([0,1]\times B(x_{0},\frac{R_{0}}{2}))}\leq C, \label{cch17}%
\end{equation}
where $C$ depends on $M,N,q$ and $K$ which is any positive constant such that
$||\nabla u_{0}||_{L^{\infty}(M)}\leq K$. \ Since $M$ is compact, we know that
(\ref{33a}) is true by using (\ref{cch16}) and (\ref{cch17}). To prove
(\ref{33b}), we proceed as in (\ref{cch17}) where we integrate on $[0,t]$ with
$t\in \lbrack0,T^{\prime})$.

$(ii)\ $Note that as in $(i),$ we assume
\[
\sup_{0\leq t<T^{\prime}}||g(t,\cdot)||_{L^{n+1}(M)}\leq \varepsilon_{1}%
\]
in order to get the control of the term $\int_{Q_{R}}g^{r+\frac{p}{2}+1}%
\phi^{k}d\mu dt$ as in (\ref{cch102}). However, if \ the sectional curvature
of $(N,g_{ij})$ is nonpositive
\[
K^{N}\leq0,
\]
it follows from (\ref{32B}) that we do not need to estimate this term any
more. Then we have the estimates (\ref{33a}) and (\ref{33b}) without
assumption (\ref{2016}) if the sectional curvature of $(N,g_{ij})$ is nonpositive.
\end{proof}

\begin{lemma}
\label{l34} Let $(M^{2n+1},J,\theta)$ be a closed Sasakian manifold and
$(N,g)$ be a compact Riemannian manifold. Let $u_{0}\in C^{2,\alpha}(M,N)$,
$0<\alpha<1,$ $||\nabla u_{0}||_{L^{\infty}(M)}\leq K$ and $\varphi_{\delta}$
is the solution of the regularized equation (\ref{4a}).

$(i)$\ There exists $\varepsilon_{1}>0$ depending on $K,\ M,N$ such that if
\begin{equation}
\sup_{0\leq t<T^{\prime}}||g(t,\cdot)||_{L^{n+1}(M)}\leq \varepsilon_{1},
\label{34a}%
\end{equation}
then
\begin{equation}
||g||_{L^{\infty}([0,T^{\prime})\times M)}\leq C, \label{34}%
\end{equation}
where $C$ is a constant depending on $K,M$ and $N$.

$(ii)$ In addition, if \ the sectional curvature of $(N,g_{ij})$ is
nonpositive
\[
K^{N}\leq0,
\]
then (\ref{34}) holds without the smallness assumption (\ref{34a}).
\end{lemma}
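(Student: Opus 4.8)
The plan is to upgrade the uniform $L^{q}$ bounds of Lemma \ref{l33} to an $L^{\infty}$ bound by running the parabolic Moser iteration once more, this time keeping track of how each constant depends on the iteration step. Fix $x_{0}\in M$ and $t_{0}$ (with $t_{0}\geq 1$ when $T'>1$), and retain the notation from the proof of Lemma \ref{l33}: $\theta=1+\tfrac1{n+1}$, $R_{s}=(1+2^{-s})\tfrac{R_{0}}{2}$ with $R_{0}=\inf(R_{M},1)$, $r_{s}=\bigl(\tfrac{2n-p+4}{2}\bigr)\theta^{s}-1$, $a_{s}=(r_{s}+\tfrac p2+1)\tfrac{n+1}{n+2}$, and
\[
\Phi_{s}=\int_{t_{0}-R_{s}}^{t_{0}}\Bigl(\int_{B(x_{0},R_{s})}g^{a_{s}}\,d\mu\Bigr)^{\frac{n+2}{n+1}}\!dt .
\]
Inequality (\ref{cch15}) reads $\Phi_{s+1}\leq C_{s}(1+\Phi_{s})^{\theta}$, and the first task is to revisit its derivation from (\ref{cch101})--(\ref{cch14}) so as to record that $C_{s}$ grows at most geometrically in $s$: the cut-offs $\psi,\eta$ contribute only negative powers of $R_{s}-R_{s+1}=2^{-(s+2)}R_{0}$, hence factors $\lesssim 2^{(s+2)(p+2r_{s}+2)}$; the Folland--Stein constant in (\ref{sobo}) is fixed on $B(x_{0},R_{0})$; and the algebraic coefficients generated by $r_{s}$ enter only polynomially in $r_{s}$. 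Since $r_{s}\sim\theta^{s}$, one obtains $C_{s}\leq C_{0}^{\,s+1}$ for some $C_{0}=C_{0}(M,N,p)$ (which we may take $\geq 1$).

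With this geometric control, put $Y_{s}=1+\Phi_{s}\geq 1$, so that $Y_{s+1}\leq 2C_{0}^{\,s+1}Y_{s}^{\theta}$ and hence $Y_{s}\leq\bigl(\prod_{j\geq 0}(2C_{0}^{\,j+1})^{\theta^{-1-j}}\bigr)^{\theta^{s}}Y_{0}^{\theta^{s}}=(D_{\infty}Y_{0})^{\theta^{s}}$, the product $D_{\infty}$ being finite because $\sum_{j}(j+1)\theta^{-1-j}<\infty$. As $\theta^{s}/a_{s}$ remains bounded (it increases to $\bigl[\tfrac{2n-p+4}{2}\cdot\tfrac{n+1}{n+2}\bigr]^{-1}$, using $p\leq 2n+2$), this gives $\sup_{s}\Phi_{s}^{1/a_{s}}\leq(D_{\infty}Y_{0})^{\sup_{s}\theta^{s}/a_{s}}<\infty$. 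The starting value is under control because $a_{0}=n+1$: under hypothesis (\ref{34a}) one has $\Phi_{0}\leq R_{0}\,\varepsilon_{1}^{\,n+2}$, while on the slice $t\in[0,1]$ one argues as for (\ref{cch17}), using $\psi^{k}(x)$ in place of $\phi^{k}(x,t)$ and integrating over $[0,t]$, bounding $\Phi_{0}$ through $\|\nabla_{b}u_{0}\|_{L^{\infty}(M)}\leq K$ together with the energy inequality (\ref{t31b}); in either case $Y_{0}$, and therefore the bound on $\Phi_{s}^{1/a_{s}}$, depends only on $K,M,N$ and is independent of $\delta\in(0,1)$. Finally, for each fixed $q$ and all $s$ with $q\leq a_{s}\tfrac{n+2}{n+1}$, H\"older's inequality in the space and time variables gives $\|g\|_{L^{q}(Q_{R_{0}/2})}\leq\|g\|_{L^{q}(Q_{R_{s}})}\leq C$ with $C$ independent of $q$; letting $q\to\infty$ yields $\|g\|_{L^{\infty}((t_{0}-\frac{R_{0}}{2},t_{0})\times B(x_{0},\frac{R_{0}}{2}))}\leq C$. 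Since $M$ is compact, covering it by finitely many such balls, letting $t_{1}=t_{0}$ range over $[1,T')$ to cover the time slabs $[t_{1}-1,t_{1}]$, and adjoining the piece over $[0,\min(T',1))$, produces (\ref{34}).

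The curvature of $N$ enters only through the term $Cf_{\delta}^{\frac{p+2}{2}}$ on the right of (\ref{32A}); this is exactly the term that spawns the super-quadratic contribution $g^{r+\frac p2+1}$ which, in (\ref{cch102}), has to be absorbed via the Folland--Stein inequality at the price of the smallness $\|g(\cdot,t)\|_{L^{n+1}}\leq\varepsilon_{1}$. When $K^{N}\leq 0$ one invokes (\ref{32B}) rather than (\ref{32A}): that term is absent, the absorption step is no longer needed, and the whole iteration closes with $\Phi_{0}$, hence $C$, depending only on $K,M,N$ and with no smallness hypothesis, which is (ii). The main obstacle I anticipate is precisely the bookkeeping of the first paragraph: one must verify that \emph{every} constant entering $C_{s}$ --- those coming from $|\nabla_{b}\phi|,|\partial_{t}\phi|\lesssim(R_{s}-R_{s+1})^{-1}$, from the Young inequalities in (\ref{cch7})--(\ref{cch8}), and from algebraic factors such as $r_{s}^{2}\sim\theta^{2s}$ --- is dominated by an exponential $C_{0}^{\,s+1}$, for otherwise $\prod_{j}C_{j}^{\theta^{-1-j}}$ could diverge and the passage to the supremum would fail. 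Once this geometric control is secured, the rest is the classical De Giorgi--Moser endpoint argument.
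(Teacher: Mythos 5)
Your overall plan --- push a Moser iteration to $q=\infty$ and control the infinite product of iteration constants --- is the right shape of argument, but the specific recursion you chose to iterate, namely (\ref{cch15}) from the proof of Lemma \ref{l33}, cannot be run to $s=\infty$, and the paper sets up a different recursion precisely to avoid this. Two things go wrong. First, your claimed bound $C_{s}\leq C_{0}^{\,s+1}$ is inconsistent with the very factor you record: the cutoffs enter (\ref{cch8}), (\ref{cch101}) and (\ref{cch12}) through $(R_{s}-R_{s+1})^{-(p+2r_{s}+2)}$, and with $R_{s}-R_{s+1}=2^{-(s+2)}R_{0}$ and $r_{s}\sim\theta^{s}$ this is of size $\exp\bigl(c\,s\,\theta^{s}\bigr)$ --- the \emph{exponent} on the cutoff scale grows like $\theta^{s}$, so the growth is far beyond geometric. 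Consequently $\sum_{j}\theta^{-1-j}\log C_{j}\sim\sum_{j}j$ diverges, $D_{\infty}=\infty$, and the passage to $\sup_{s}\Phi_{s}^{1/a_{s}}$ fails. Second, and independently, (\ref{cch15}) at stage $s$ is only obtained after absorbing $C_{1}\int_{Q_{R}}g^{r_{s}+\frac{p}{2}+1}\phi^{k}$ via (\ref{cch102}), which requires $\sup_{t}\|g(\cdot,t)\|_{L^{n+1}}\leq\frac{1}{2C_{1}(r_{s})C_{2}(r_{s})}$; since $C_{1},C_{2}$ grow with $r_{s}$ (for instance $C_{2}$ carries the factor $(\tfrac{r+\frac{p}{2}+1}{2})^{2}$ from (\ref{cch9})), the admissible threshold tends to $0$ as $s\to\infty$, so no single $\varepsilon_{1}>0$ validates all stages. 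This is exactly why Lemma \ref{l33} states $\varepsilon_{1}=\varepsilon_{1}(M,N,q)$ and delivers only a $q$-dependent $L^{q}$ bound.

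The paper's proof circumvents both obstacles by abandoning the absorption in the $L^{\infty}$ step: it tests (\ref{32A}) against $g^{r}\phi^{2}$, and after the H\"older/Young rearrangement in (\ref{cch26})--(\ref{cch27}) the troublesome term $\int_{Q_{R}}g^{r+\frac{p}{2}+1}$ is \emph{kept} on the right-hand side, where it is exactly the previous iteration quantity $\int g^{a_{s}}$ (with $a_{s}=r_{s}+\frac{p}{2}+1$ and $a_{s+1}=\theta r_{s}+\frac{p}{2}+\frac{1}{n+1}$); moreover the exponents on $(1+r)$ and $(R-\rho)^{-1}$ appearing in (\ref{cch27}) are $\frac{2r+p+2}{2r+p}$, $\frac{4r+2p+4}{2r+p}$ and $\frac{2r+p+2}{2r+2}$, all bounded in $r$, so the iteration constants genuinely grow geometrically and the standard Moser lemma closes. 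The smallness hypothesis is then consumed only once, to invoke Lemma \ref{l33} at the single fixed exponent $q=a_{0}=n+\frac{p}{2}+2$ and bound the starting datum in (\ref{cch29}). To salvage your route you would need to redo the Young-inequality step so that the cutoff factors carry $r$-independent exponents and no $r$-dependent smallness is used --- which amounts to reconstructing the paper's recursion. Your discussion of part (ii), and your identification of the term $Cf_{\delta}^{\frac{p+2}{2}}$ as the sole place the curvature sign enters, are consistent with the paper.
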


\begin{proof}
$(i)\ $Let $\phi(x,t)=\psi(x)\eta(t)$ and $Q_{R}$ is as in the proof of Lemma
\ref{l33}. Multiplying inequality (\ref{32A}) by $g^{r}\phi^{2}$, $r\geq0$ and
similar to (\ref{101}), we have
\begin{align}
&  \frac{1}{1+r}{\sup}_{t\leq t_{0}}\int_{B(x_{0},R)}g^{1+r}\phi^{2}d\mu
+\frac{r}{2}\int_{Q_{R}}f_{\delta}^{\frac{p-2}{2}}g^{r-1}|\nabla_{b}g|^{2}%
\phi^{2}d\mu dt\nonumber \\
&  +(p-2)(\frac{3r}{4}-1)\int_{Q_{R}}f_{\delta}^{\frac{p-4}{2}}g^{r-1}%
\langle \nabla_{b}f_{\delta},\nabla_{b}\varphi^{l}\rangle^{2}\phi^{2}d\mu
dt\nonumber \\
&  -(p-2)r^{2}\varepsilon \int_{Q_{R}}f_{\delta}^{\frac{p-4}{2}}g^{r-2}%
e_{0}\langle \nabla_{b}\varphi^{l},\nabla_{b}f_{\delta}\rangle^{2}\phi^{2}d\mu
dt\nonumber \\
&  +\frac{p-2}{2}\int_{Q_{R}}f_{\delta}^{\frac{p-4}{2}}g^{r}\phi^{2}%
|\nabla_{b}f_{\delta}|^{2}d\mu dt+2\int_{Q_{R}}f_{\delta}^{\frac{p-2}{2}%
}|\nabla_{b}^{2}\varphi^{l}|^{2}g^{r}\phi^{2}d\mu dt\nonumber \\
&  +\frac{7\varepsilon}{4}\int_{Q_{R}}f_{\delta}^{\frac{p-2}{2}}|\nabla
_{b}\varphi_{0}^{l}|^{2}g^{r}\phi^{2}d\mu dt-(p-2)r^{2}\varepsilon^{3}%
\int_{Q_{R}}f_{\delta}^{\frac{p-2}{2}}g^{r-2}e_{0}^{2}|\nabla_{b}\varphi
_{0}^{l}|^{2}\phi^{2}d\mu dt\nonumber \\
&  -4(p-2)r\varepsilon^{2}\int_{Q_{R}}f_{\delta}^{\frac{p-2}{2}}|\nabla
_{b}\varphi_{0}^{l}|^{2}e_{0}g^{r-1}\phi^{2}d\mu dt\nonumber \\
&  \leq \frac{2}{1+r}\int_{Q_{R}}g^{1+r}\phi|\frac{\partial \phi}{\partial
t}|d\mu dt+4(p-2)\varepsilon \int_{Q_{R}}f_{\delta}^{\frac{p-2}{2}}g^{r}%
e_{0}|\nabla_{b}\phi|^{2}d\mu dt\nonumber \\
&  +\frac{8}{r}\int_{Q_{R}}f_{\delta}^{\frac{p-2}{2}}g^{1+r}|\nabla_{b}%
\phi|^{2}d\mu dt+(p-2)\int_{Q_{R}}f_{\delta}^{\frac{p-2}{2}}g^{1+r}|\nabla
_{b}\phi|^{2}d\mu dt\nonumber \\
&  +\frac{16}{\varepsilon}\int_{Q_{R}}f_{\delta}^{\frac{p}{2}}g^{r}\phi
^{2}d\mu dt+C\int_{Q_{R}}[f_{\delta}^{\frac{p}{2}}+f_{\delta}^{\frac{p+2}{2}%
}+\varepsilon f_{\delta}^{\frac{p}{2}}e_{0}]g^{r}\phi^{2}d\mu dt.\nonumber
\end{align}
For any fixed $t$, we choose $\varepsilon$ small enough so that
\begin{align}
&  \frac{1}{1+r}{\sup}_{t\leq t_{0}}\int_{B(x_{0},R)}g^{1+r}\phi^{2}d\mu
+\frac{r}{2}\int_{Q_{R}}g^{r+\frac{p}{2}-2}|\nabla_{b}g|^{2}\phi^{2}d\mu
dt\nonumber \\
&  \leq C\int_{Q_{R}}g^{1+r}\phi|\frac{\partial \phi}{\partial t}|d\mu
dt+C\int_{Q_{R}}g^{r+\frac{p}{2}}|\nabla_{b}\phi|^{2}d\mu dt+C\int_{Q_{R}%
}(g^{r+\frac{p}{2}}+g^{r+\frac{p}{2}+1})\phi^{2}d\mu dt, \label{cch11}%
\end{align}
where $C$ is a positive constant depending on $M,N,r$ and $p$. Since
\[
\int_{Q_{R}}g^{r+\frac{p}{2}-2}|\nabla_{b}g|^{2}\phi^{2}d\mu dt\geq \frac
{8}{(p+2r)^{2}}\int_{Q_{R}}|\nabla_{b}(g^{\frac{r}{2}+\frac{p}{4}}\phi
)|^{2}d\mu dt-\frac{16}{(p+2r)^{2}}\int_{Q_{R}}g^{r+\frac{p}{2}}|\nabla
_{b}\phi|^{2}d\mu dt,
\]
so
\begin{align}
&  \frac{1}{1+r}{\sup}_{t\leq t_{0}}\int_{B(x_{0},R)}g^{1+r}\phi^{2}d\mu
+\frac{4r}{(p+2r)^{2}}\int_{Q_{R}}|\nabla_{b}(g^{\frac{r}{2}+\frac{p}{4}}%
\phi)|^{2}d\mu dt\nonumber \\
&  \leq C\int_{Q_{R}}g^{1+r}\phi|\frac{\partial \phi}{\partial t}|d\mu
dt+(C+\frac{8r}{(p+2r)^{2}})\int_{Q_{R}}g^{r+\frac{p}{2}}|\nabla_{b}\phi
|^{2}d\mu dt\label{cch11}\\
&  +C\int_{Q_{R}}(g^{r+\frac{p}{2}}+g^{r+\frac{p}{2}+1})\phi^{2}d\mu
dt.\nonumber
\end{align}
From the definition of $\phi$,
\begin{align}
&  \sup_{t\leq t_{0}}\int_{B(x_{0},R)}g^{1+r}\phi^{2}d\mu+\int_{Q_{R}}%
|\nabla_{b}(g^{\frac{r+2}{2}}\phi)|^{2}\mu dt\nonumber \\
&  \leq C[(1+r)\int_{Q_{R}}g^{\frac{p}{2}+r}d\mu dt+(1+r)\int_{Q_{R}%
}g^{1+\frac{p}{2}+r}\phi^{2}d\mu dt\label{cch24}\\
&  +(R-\rho)^{-1}\int_{Q_{R}}g^{1+r}d\mu dt+(R-\rho)^{-2}\int_{Q_{R}}%
g^{\frac{p}{2}+r}d\mu dt],\nonumber
\end{align}
where $C$ is a constant depending on $M,N$. Let $q=(1+\frac{1}{n+1}%
)r+(\frac{p}{2}+\frac{1}{n+1})$. By using H\"{o}lder inequality,
\begin{equation}%
\begin{array}
[c]{l}%
\int_{Q_{\rho}}g^{q}d\mu dt\\
\leq \sup_{t\leq t_{0}}(\int_{B(x_{0},R)}g^{1+r}\phi^{2}d\mu)^{\frac{1}{n+1}%
}(\int_{t_{0}-R}^{t_{0}}(\int_{B(x_{0},R)}(g^{\frac{r}{2}+\frac{p}{4}}%
\phi)^{\frac{2(n+1)}{n}}d\mu)^{\frac{n}{n+1}}dt).
\end{array}
\label{cch22}%
\end{equation}
From the CR Sobolev inequality (\ref{sobolev}) again (with $p=2$)$,$ we have
\begin{equation}
(\int_{B(x_{0},R)}V^{\frac{2(n+1)}{n}}d\mu)^{\frac{n}{2n+2}}\leq
c(\int_{B(x_{0},R)}|\nabla_{b}V|^{2}d\mu)^{\frac{1}{2}}, \label{cch2016}%
\end{equation}
for all $V\in C_{0}^{1}(B(x_{0},R))$. Let $V=g^{\frac{r}{2}+\frac{p}{4}}\phi
$,
\begin{equation}
(\int_{B(x_{0},R)}(g^{\frac{r}{2}+\frac{p}{4}}\phi)^{\frac{2(n+1)}{n}}%
d\mu)^{\frac{n}{n+1}}\leq c\int_{B(x_{0},R)}|\nabla_{b}(g^{\frac{r}{2}%
+\frac{p}{4}}\phi)|^{2}d\mu. \label{cch23}%
\end{equation}
It follows from (\ref{cch22}) and (\ref{cch23}) that
\begin{equation}
\int_{Q_{\rho}}g^{q}d\mu dt\leq c\sup_{t\leq t_{0}}(\int_{B(x_{0},R)}%
g^{1+r}\phi^{2}d\mu)^{\frac{1}{n+1}}(\int_{Q_{R}}|\nabla_{b}(g^{\frac{r}%
{2}+\frac{p}{4}}\phi)|^{2}\mu dt). \label{cch25}%
\end{equation}
and from (\ref{cch24}) and (\ref{cch25}) that
\begin{align}
&  \int_{Q_{\rho}}g^{q}d\mu dt\leq C[[(1+r)+(R-\rho)^{-2}]\int_{Q_{R}}%
g^{\frac{p}{2}+r}d\mu dt\label{cch26}\\
&  +(R-\rho)^{-1}\int_{Q_{R}}g^{1+r}d\mu dt+(1+r)\int_{Q_{R}}f^{\frac{p}%
{2}+r+1}d\mu dt]^{\frac{n+2}{n+1}}.\nonumber
\end{align}
By using H\"{o}lder inequality and Young's inequality in (\ref{cch26}),%
\begin{equation}%
\begin{array}
[c]{l}%
\int_{Q_{\rho}}g^{q}d\mu dt\leq C\{[(1+r)^{\frac{2r+p+2}{2r+p}}+(R-\rho
)^{\frac{-4r-2p-4}{2r+p}}\\
+(R-\rho)^{\frac{-2r-p-2}{2r+2}}]\int_{Q_{R}}g^{\frac{p}{2}+r+1}d\mu
dt+|Q_{R}|\}^{\frac{n+2}{n+1}},
\end{array}
\label{cch27}%
\end{equation}
where $C$ depends on $M,N$ and $p$. Now we use a Moser iteration process to
obtain a uniform estimate. Let $R_{s}=(1+2^{-s})\frac{R_{0}}{2}$ and
$\theta=1+\frac{1}{n+1}$. Define $r_{s}$ and $a_{s}$ by
\[
r_{s}=\theta^{s}+n\quad \text{and}\quad a_{s}=r_{s}+\frac{p}{2}+1
\]
so that
\[
a_{s+1}=\theta r_{s}+\frac{p}{2}+\frac{1}{n+1}.
\]
We apply (\ref{cch27}) with $\rho=R_{s+1}$, $R=R_{s}$ and $r=r_{s}$ to find
\[
\int_{Q_{R_{s+1}}}g^{a_{s+1}}d\mu dt\leq C\{ \int_{Q_{R_{s}}}g^{a_{s}}d\mu
dt+C\}^{\frac{n+2}{n+1}},
\]
where $C$ is a positive constant depending only on $M,N$. By Moser iteration
process, we have
\begin{equation}
||g||_{L^{\infty}(Q_{\frac{R_{0}}{2}})}\leq C\{ \int_{Q_{R_{0}}}g^{a_{0}}d\mu
dt+C\}. \label{cch29}%
\end{equation}
Now we suppose that
\[
\sup_{0\leq t\leq T^{\prime}}||g(\cdot,t)||_{L^{n+1}(M)}\leq \varepsilon_{1}%
\]
with $\varepsilon_{1}=\varepsilon,$ where $\varepsilon$ is the constant in
Lemma \ref{l32} corresponding to $q=a_{0}=n+\frac{p}{2}+2$. By Lemma
\ref{l33}, the right hand side of (\ref{cch29}) is bounded by a constant
depending on $K,M,N$. Since $M$ is compact, we have
\[
||g||_{L^{\infty}([0,T^{\prime})\times M)}\leq C.
\]

$(ii)$ The same proof without the smallness assumption if the sectional
curvature of $(N,g_{ij})$ is nonpositive, we have the estimate (\ref{34}).
\end{proof}

\bigskip

Now we are ready to the proof of \textbf{Theorem \ref{T31} : }

\begin{proof}
$(i)\ $First, let us show that
\[
\sup_{0\leq t\leq T^{\prime}}||g(\cdot,t)||_{L^{n+1}(M)}\leq \varepsilon_{1},
\]
where $g=f_{\delta}+\varepsilon e_{0}(\varphi)$. To this end, we set
\[
T^{\ast}=\sup \{ \tilde{T}\in \lbrack0,T^{\prime}):\sup_{0\leq t\leq \tilde{T}%
}||g(\cdot,t)||_{L^{n+1}(M)}\leq \varepsilon_{1}\}.
\]
We want to prove that $T^{\ast}=T^{\prime}$. \ If $T^{\ast}<T^{\prime}$, we
have
\[
\sup_{0\leq t\leq T^{\ast}}||g(\cdot,t)||_{L^{n+1}(M)}\leq \varepsilon_{1}%
\]
So from Lemma \ref{l34}, we have
\[
||g(\cdot,t)||_{L^{\infty}([0,T^{\ast}]\times M)}\leq C.
\]
From this inequality, we have
\[
f_{\delta}\leq C\quad \text{\textrm{and}}\quad \varepsilon e_{0}\leq C.
\]
Take $\varepsilon$ small enough, we have
\[
\varepsilon e_{0}\leq \frac{\varepsilon_{0}}{2}%
\]
where $\varepsilon_{0}$ satisfying
\[
E_{p}(u_{0})=\frac{1}{p}\int_{M}|\nabla_{b}u_{0}|^{p}d\mu \leq \varepsilon_{0}.
\]
On the other hand, we have for all $t\in \lbrack0,T^{\ast}]$,
\begin{equation}
||g(\cdot,t)||_{L^{n+1}(M)}^{n+1}\leq||g||_{L^{\infty}([0,T^{\ast}]\times
M)}^{n+1-\frac{p}{2}}||g(\cdot,t)||_{L^{\frac{p}{2}}(M)}^{\frac{p}{2}}.
\label{cch32}%
\end{equation}
Since $(x+y)^{p}\leq C_{p}(x^{p}+y^{p})$ for all $p>1$ and we can choose
$\delta<\frac{\varepsilon_{0}}{2}$, we have
\begin{equation}
||g||_{L^{\frac{p}{2}}(M)}^{\frac{p}{2}}=\int_{M}[|\nabla_{b}\varphi_{\delta
}|^{2}+\delta+\varepsilon e_{0}]^{\frac{p}{2}}dv\leq C_{p}\int_{M}[|\nabla
_{b}u_{0}|^{p}dv+\varepsilon_{0}^{\frac{p}{2}}Vol(M)\leq C_{1}\varepsilon_{0}.
\label{cch33}%
\end{equation}

From (\ref{cch32}) and (\ref{cch33}), we have
\[
\sup_{0\leq t\leq T^{\ast}}||g(\cdot,t)||_{L^{n+1}(M)}^{n+1}\leq C^{n-1}%
C_{1}\varepsilon_{0}.
\]
We can take $\varepsilon_{0}$ such that $[C^{n-1}C_{1}\varepsilon_{0}%
]^{\frac{1}{n+1}}<\frac{\varepsilon_{1}}{2}$, so we have
\[
\sup_{0\leq t\leq T^{\ast}}||g(\cdot,t)||_{L^{n+1}(M)}\leq \lbrack C^{n}%
C_{1}\varepsilon_{0}]^{\frac{1}{n+1}}\leq \frac{\varepsilon_{1}}{2}.
\]
Since $f$ is continuous, we see that there exists $h>0$ such that
\[
\sup_{0\leq t\leq T^{\ast}+h}||g(\cdot,t)||_{L^{n+1}(M)}\leq \frac
{3\varepsilon_{1}}{4}\leq \varepsilon_{1},
\]
so contradicting the definition of $T^{\ast}$. Then $T^{\ast}=T^{\prime}$ and
$\sup_{0\leq t\leq T^{\ast}+h}||g(\cdot,t)||_{L^{n+1}(M)}\leq \varepsilon_{1}$.
By Lemma \ref{l34}, we have
\[
||g(\cdot,t)||_{L^{\infty}([0,T^{\prime})\times M)}\leq C.
\]
That is
\[
||\nabla_{b}\varphi_{\delta}||_{L^{\infty}([0,T^{\prime})\times M)}\leq
C\  \  \  \mathrm{and}\  \  \ ||\mathbf{T}\varphi_{\delta}||_{L^{\infty
}([0,T^{\prime})\times M)}\leq C,\
\]
where $C$ is a constant depending on $K,M$ and $N$.

$(ii)\  \ $The same proof as in $(i)$ except we do not need the smallness
assumption (\ref{t31a}) due to $K_{N}\leq0.$

$(iii)$\ By integration by parts, we compute as in \cite[Lemma 3.1.]{cc1}, one
has
\[
\frac{d}{ds}E_{p,\delta}(\varphi_{\delta}(\cdot,s))=-\int_{M}|\partial
_{s}\varphi_{\delta}(\cdot,s)|^{2}d\mu.
\]
Integrating the above equality over $[0,t]$ gives%

\[
\int_{0}^{t}\int_{M}|\partial_{s}\varphi_{\delta}|^{2}(x,s)d\mu ds+E_{p,\delta
}(\varphi_{\delta}(\cdot,t))=E_{p,\delta}(u_{0})\leq E_{p,1}(u_{0}%
),\quad \forall \,t\in \lbrack0,T_{\delta}).
\]

\end{proof}

\bigskip

Now we are ready to prove the global existence and asymptotic convergence of
the $p$-pseudoharmonic map heat flow. We will give the proof of
\textbf{Theorem \ref{t2} }which is based on \cite{fr2} and \cite{ccw}. The
proof of \textbf{Theorem \ref{t1} }is the similar to the proof of\textbf{
Theorem \ref{t2}.}

The proof of \textbf{Theorem \ref{t2} :}

\begin{proof}
We first observe that the proof is standard once we have the estimate
(\ref{t31}) plus CR divergence theorem and Green's identity as in \cite{ccw}.
\ We will sketch the proof and refer to the last section of \cite{fr2} for
more details.

$(i)\ $Global Existence : Firstly, it follows from \cite{hs}, \cite{d},
\cite{df} and \cite{ch} that there exist constants $\beta \in(0,1)\ $and $C$
depending only on $M,\ N,\ p$ such that%
\begin{equation}
||\varphi_{\delta}||_{C^{\beta}(M\times \lbrack0,T_{\delta}),N)}+||\nabla
_{b}\varphi_{\delta}||_{C^{\beta}(M\times \lbrack0,T_{\delta}),N)}\leq C.
\label{41}%
\end{equation}
This and the theory of parabolic equations (\cite{lsu}) imply
\[
\sup_{0\leq t<T_{\delta}}(||\varphi_{\delta}(\cdot,t)||_{C^{2+\alpha}%
(M,N)}+||\partial_{t}\varphi_{\delta}(\cdot,t)||_{C^{\alpha}(M,N)}\leq
C_{\delta}%
\]
for $0<\alpha<1$. It is clear that the maximal time will be infinite
\[
T_{\delta}=\infty.
\]
Moreover, by the energy inequality (\ref{t31b}), there exist a sequence
$\delta_{k}\rightarrow0$ and $\varphi \in C^{\beta}(M\times \lbrack0,\infty),N)$
and $\nabla_{b}\varphi \in C^{\beta}(M\times \lbrack0,\infty),N)$ such that
\[
\left \{
\begin{array}
[c]{l}%
(i)\ ||\nabla_{b}\varphi||_{C^{\beta}(M\times \lbrack0,\infty),N)}\leq C_{1},\\
(ii)\  \partial_{t}\varphi \in L^{2}(M\times \lbrack0,\infty)),
\end{array}
\right.
\]
and
\[
\left \{
\begin{array}
[c]{ccl}%
\varphi_{\delta_{k}} & \rightarrow & \varphi \  \  \  \mathrm{in\ }C_{loc}%
^{\beta^{\prime}}(M\times \lbrack0,\infty),N)\text{\  \textrm{for\ all} }%
\beta^{\prime}<\beta,\\
\nabla_{b}\varphi_{\delta_{k}} & \rightarrow & \nabla_{b}\varphi
\  \  \  \  \mathrm{in\ }C_{loc}^{\beta^{\prime}}(M\times \lbrack0,\infty
),N)\text{\  \textrm{for\ all} }\beta^{\prime}<\beta,\\
\partial_{t}\varphi_{\delta_{k}} & \rightarrow & \partial_{t}\varphi
\  \  \  \text{\textrm{weakly in} }L^{2}(M\times \lbrack0,\infty)).
\end{array}
\right.
\]
Now by multiplying $\psi \in C_{0}^{\infty}(M\times \lbrack0,\infty))$ for
(\ref{4a}) and integrating on $M\times \lbrack0,\infty),$ we have%
\begin{equation}%
\begin{array}
[c]{l}%
\int_{0}^{\infty}\int_{M}\partial_{t}\varphi \psi d\mu dt+\int_{0}^{\infty}%
\int_{M}|\nabla_{b}\varphi|^{p-2}\nabla_{b}\varphi \nabla_{b}\psi d\mu dt\\
=2\int_{0}^{\infty}\int_{M}|\nabla_{b}\varphi|^{p-2}h^{\alpha \bar{\beta}%
}\tilde{\Gamma}_{ij}^{k}\varphi_{\alpha}^{i}\varphi_{\bar{\beta}}^{j}\psi d\mu
dt,
\end{array}
\label{42}%
\end{equation}
as $\delta_{k}\rightarrow0.$ hence $\varphi$ is a weak solution of (\ref{4}).
The uniqueness is also standard. We refer to the last section of \cite{fr2}
for more details.

$(ii)\ $Asymptotic Convergence : Since $\partial_{t}\varphi \in L^{2}%
(M\times \lbrack0,\infty)),$ it follows from (\ref{42}) that for almost $t>0$
and $\phi \in C^{\infty}(M)$%
\[
\int_{M}\partial_{t}\varphi \phi d\mu+\int_{M}|\nabla_{b}\varphi|^{p-2}%
\nabla_{b}\varphi \nabla_{b}\phi d\mu=2\int_{M}|\nabla_{b}\varphi
|^{p-2}h^{\alpha \bar{\beta}}\tilde{\Gamma}_{ij}^{k}\varphi_{\alpha}^{i}%
\varphi_{\bar{\beta}}^{j}\phi d\mu
\]
and then there exists a sequence $t_{k}\rightarrow \infty$ such that
\[
||\partial_{t}\varphi(t_{k},\cdot)||_{L^{2}(M)}\rightarrow0.
\]
Furthermore, we have
\[
||\varphi(t_{k},\cdot)||_{C^{1+\beta}(M,N)}\leq C
\]
and then
\[
\varphi(t_{k},\cdot)\rightarrow \varphi_{\infty}(\cdot)\text{ }\  \mathrm{in\ }%
C^{1+\beta^{\prime}}(M,N)\text{\  \textrm{for\ all} }\beta^{\prime}<\beta.
\]
Moreover $\varphi_{\infty}$ is a weakly $p$-harmonic map and $\varphi_{\infty
}\in C^{1+\beta}(M,N)$ with%
\[
E_{p}(\varphi_{\infty})\leq E_{p}(u_{0}).
\]

In addition if we choose $\overline{\varepsilon_{0}}>0$ such that $E_{p}%
(u_{0})\leq \overline{\varepsilon_{0}}$, then for a fixed $q>Q=2n+2$%
\[
\int_{M}|\nabla_{b}\varphi_{\infty}|^{q}d\mu \leq pC^{q-p}\overline
{\varepsilon_{0}}%
\]
and for$\ N\subset \mathbf{R}^{l}$
\[
||\varphi_{\infty}||_{S^{1,q}(M,\mathbf{R}^{l})}\leq C\overline{\varepsilon
_{0}}.
\]
Here we apply Poincare inequality by changing the origin in $\mathbf{R}^{l}$
such that $\int_{M}\varphi_{\infty}d\mu=0.$ It follows from CR Sobolev
embedding theorem (\cite[Theorem 21.1.]{fs}, \cite{jl}) that
\[
||\varphi_{\infty}||_{C^{\frac{1}{2}(1-\frac{Q}{q})}(M,\mathbf{R}^{l})}\leq
C\overline{\varepsilon_{0}}.
\]
Thus if we choose $\overline{\varepsilon_{0}}$ small enough, $\varphi_{\infty
}(M)$ is contained in a convex geodesic ball $U$ of $N.$ By following the
first name author's previous result as in \cite[Lemma 4.2.]{cc2} (or
\cite{gor}), we consider the composite function $G:=F\circ \varphi$ \ with a
smooth function $F\ $defined on $U$. \ In particular, let us choose
$F(y)=\sum_{j}(y_{j})^{2}$, where $\{y_{j}\}_{j}$ is the coordinate system of
$U$. \ Thus
\[%
\begin{array}
[c]{l}%
\operatorname{div}_{b}(|\nabla_{b}\varphi_{\infty}|^{p-2}\nabla_{b}G)\\
=\sum_{k}F_{k}[\operatorname{div}_{b}(|\nabla_{b}\varphi|^{p-2}\nabla
_{b}\varphi^{k})+2|\nabla_{b}\varphi|^{p-2}h^{\alpha \bar{\beta}}\tilde{\Gamma
}_{ij}^{k}\varphi_{\alpha}^{i}\varphi_{\bar{\beta}}^{j}]\\
\  \  \ +|\nabla_{b}\varphi_{\infty}|^{p-2}\sum_{i,j}\sum_{\alpha}F_{ij}%
(\varphi_{\infty}^{i})_{\alpha}(\varphi_{\infty}^{j})_{\overline{\alpha}}\\
=\sum_{k}F_{k}[\operatorname{div}_{b}(|\nabla_{b}\varphi|^{p-2}\nabla
_{b}\varphi^{k})+2|\nabla_{b}\varphi|^{p-2}h^{\alpha \bar{\beta}}\tilde{\Gamma
}_{ij}^{k}\varphi_{\alpha}^{i}\varphi_{\bar{\beta}}^{j}]+|\nabla_{b}%
\varphi_{\infty}|^{p}.
\end{array}
\]
Since $\varphi_{\infty}$ is a weakly $p$-harmonic map, we obtain by
integrating both sides
\[
\int_{M}|\nabla_{b}\varphi_{\infty}|^{p}d\mu=0
\]
and we \ deduce that $\nabla_{b}\varphi_{\infty}=0.$ This completes the proof
of Theorem \ref{t2}.
\end{proof}


\begin{thebibliography}{9999}                                                                                             %


\bibitem[BDU]{bdu}E. Barletta, S. Dragomir and H. Urakawa, Pseudoharmonic Maps
from Nondegenerate CR Manifolds to Riemannian Manifolds, IUMJ Vol. 50 (2001), 719-746.

\bibitem[CC1]{cc1}S.-C. Chang and T.-H. Chang, On the existence of
pseudoharmonic maps form pseudohermtian manifolds into Riemannian manifolds
with nonpositive sectional curavature, Asian J. Math. 17 (2013), 1-16.

\bibitem[CC2]{cc2}S.-C. Chang and T.-H. Chang, Finite-time blow up for the
heat flow of pseudoharmonic maps, Indiana Uni. Math. J. 64 (2015), 441-470.

\bibitem[CCW]{ccw}S.-C. Chang, C.-W. Chen and C.-T. Wu, On the CR Analogue of
Reilly Formula and Yau Eigenvalue Conjecture, arXiv:1503.07639.

\bibitem[CD]{cd}Y. Chen and W.-Y. Ding, Blow-up and global existence for heat
flows of harmonic maps, Invent. Math. 99 (1990), 567-578.

\bibitem[CDRY]{cdry}T. Chong, Y. Dong, Y. Ren and G. Yang, On Harmonic and
Pseudoharmonic Maps from Strictly Pseudoconvex CR Manifolds, arXiv: 1505.02170v1.

\bibitem[Ch]{ch}H.J. Choe, H\H{o}lder regularity for the gradients of
solutions of certain parabolic equations, Comm. Partial Differential Equations
16 (1991), 1709--1732.

\bibitem[CHH]{chh}Y. Chen, M.-C. Hong and N. Hungerbuhler, Heat flow of
$p$-harmonic maps with values into spheres, Math. Z. 215 (1994), 25--35.

\bibitem[CS]{cs}Y. Chen and M. Struwe, Existence and partial regularity
results for the heat flow for harmonic maps, Math. Z. (1989) 201, 83-103.

\bibitem[D]{d}E. Dibenedetto, Degenerate parabolic equations, Universitext,
Springer, Berlin, 1993.

\bibitem[DF]{df}E. Dibenedetto and A. Friedman, H\H{o}lder estimates for
nonlinear degenerate parabolic systems, J. Reine. Angew. Math. 357 (1984), 1--22.

\bibitem[DT]{dt}S. Dragomir and G. Tomassini, Differential Geometry and
Analysis on CR Manifolds, Progress in Mathematics, 246. Birkhauser Boston,
Inc., Boston, MA, 2006. xvi+487 pp.

\bibitem[ES]{es}J. Eells and J. H. Sampson, Harmonic mappings of Riemannian
manifolds, Amer. J. Math., 86(1964), 109-160.

\bibitem[DLS]{dls}Y. Dong, G. Lu and L. Sun, Global Poincare Inequalities on
the Heisenberg Group and Applications, Acta Mathematica Sinica, English Series
Apr., Vol. 23, No. 4 (2007), pp. 735--744.

\bibitem[F]{f}G. B. Folland, Subelliptic estimates and function spaces on
nilpotent Lie groups. ARKIV F\"{O}R MATEMATIK, Volume 13, Number 1-2 (1975), 161-207.

\bibitem[FR1]{fr1}A. Fardoun and R. Regbaoui, Heat flow for $p$-harmonic maps
between compact Riemannian manifolds, Indiana Uni. Math. J. (2002), 1305-1320.

\bibitem[FR2]{fr2}A. Fardoun and R. Regbaoui, Heat flow for $p$-harmonic maps
with small intial data, Calc. Var. 16(2003), 1-16.

\bibitem[FS]{fs}G. B. Folland and E. M. Stein, Estimates for the
$\bar{\partial}_{b}$ Complex and Analysis on the Heisenberg Group, Comm. Pure
Appl. Math., 27 (1974), 429-522.

\bibitem[G]{g}A. Greenleaf, The first eigenvalue of a sublaplacian on a
pseudohermitian manifold, Comm. Part. Diff. Equ. 10(2) No.3 (1985), 191-217.

\bibitem[Gor]{gor}W. B. Gordon, Convex functions and harmonic maps, Proc.
Amer. Math. Soc. 33 (1972), 433-437.

\bibitem[H1]{h1}N. Hungerbuhler, Global weak solutions of the $p$-harmonic
flow into homogeneous space, Indiana Univ. Math. J 45/1 (1996), 275--288.

\bibitem[H2]{h2}N. Hungerbuhler, $m$-Harmonic flow, Ann. Sc. Norm. Super. Pisa
IV. Ser. 24, No.4 (1997), 593--631.

\bibitem[HL]{hl}R. Hardt and H.-F. Lin, Mappings minimizing the $L^{p}$-Norm
of the Gradient, Communications on Pure and Applied Mathematics, Vol. XL
(1987), 555-588.

\bibitem[HS]{hs}P. H\'{a}jlasz and P. Strzelecki, Subelliptic p-harmonic maps
into spheres and the ghost of Hardy spaces, Math. Ann. 312 (1998), 341--362.

\bibitem[J]{j}D. Jerison, The Poincare inequality for vector fields satisfying
H\H{o}rmander's condition, Duke Math. J., 53 (1986), 503-523.

\bibitem[JL]{jl}D. Jerison and J. M. Lee, The Yamabe problem on CR manifolds,
J. Differential Geom. Volume 25, Number 2 (1987), 167-197.

\bibitem[K]{k}B. Kim, Poincar\'{e} inequality and the uniqueness of solutions
for the heat equation associated with subelliptic diffusion operators, arXiv:1305.0508.

\bibitem[L]{l}J. M. Lee, Pseudo-Einstein Structure on $CR$ Manifolds, Amer. J.
Math. 110 (1988), 157-178.

\bibitem[LSU]{lsu}O. A. Ladyzenskaya,V. A. Solonnikov and N. N. Ural'ceva,
Linear and quasilinear equations of parabolic type, AMS Providence R.I., 1968.

\bibitem[Lu]{lu}G. Lu, Weighted Poincare and Sobolev inequalities for vector
fields satisfying H\H{o}rmander's condition and applications. Revista Mat.
Iberoamericana, 8 (1992), 367--439.

\bibitem[M]{m}J\H{u}rgen Moser. A Harnack Inequality for Parabolic
Differential Equations. Communications on pure and applied mathematics, vol.
17, 101-134 (1964).

\bibitem[XZ]{xz}C.-J. Xu and C. Zuily, Higher interior regularity for
quasilinear subelliptic systems, Calc. Var. Partial Differential Equations 5
(1997), 323-343.
\end{thebibliography}
\end{document}